\newlength\bibsep
\documentclass[final,11pt,a4paper,3p,times,nonatbib]{elsarticle}

\usepackage{amssymb}
\usepackage{mathrsfs}
\usepackage{amsthm}
\usepackage{booktabs}
\usepackage{multirow}

\usepackage{algorithm}
\usepackage[noend]{algorithmic}
\usepackage{color,xcolor}
\usepackage{amsmath}
\usepackage{apacite}
\usepackage{url}
\usepackage{setspace}
\newtheorem{theorem}{Theorem}
\usepackage{subfig}
\usepackage{array}
\AtBeginEnvironment{algorithm}{\onehalfspacing}
\AtBeginEnvironment{algorithmic}{\onehalfspacing}     
\allowdisplaybreaks
\usepackage{threeparttable}
\newtheorem{proposition}{Proposition}
\renewcommand\theparagraph{\thesubsubsection.\arabic{paragraph}}
\makeatother
\usepackage{titlesec}
\titleformat{\paragraph}[block]{\normalfont\normalsize\itshape}{\theparagraph}{1em}{}
\titlespacing{\paragraph}{0pt}{1ex plus .2ex}{0.6ex}
\newtheorem{lemma}{Lemma}

\usepackage{lineno}

\journal{}

\begin{document}
\begin{frontmatter}
		
		

		\title{A Combinatorial Benders Decomposition Framework for Two-Dimensional Irregular Bin Packing Problems with Convex Polygons}
		
		\author[]{Jianming Wang$^{a}$,  Zhouwang Yang$^{a,}$\corref{cor}{}}
		\cortext[cor]{Corresponding author. 
		{\it E-mail address}: yangzw@ustc.edu.cn (Zhouwang Yang)}
		\affiliation[USTC]{organization={University of Science and Technology of China},
			city={Hefei},
			postcode={230026}, 
			country={P. R. China}}
		

		\begin{abstract}
		Two-dimensional irregular bin packing combines combinatorial bin-assignment decisions with difficult geometric feasibility constraints, making exact optimization challenging. This paper develops an exact combinatorial Benders decomposition framework for the two-dimensional irregular bin packing problem with convex polygons, coupling a pattern-based master problem with an exact single-bin geometric feasibility oracle. The dynamically strengthened Benders master is solved by a tailored exact branch-and-price procedure that incorporates objective-layered search and adaptive exact pricing. Geometric information obtained from the oracle is further fed back to the master and pricing processes through dynamically generated Benders feasibility cuts, which progressively restrict the subsequent pricing problems.  Computational experiments are conducted on 540 benchmark instances from 18 classes. The proposed method obtains the optimal solution for 318 instances across 12 classes within a 3600-second time limit.
        On these classes, the proposed method solves more instances than a direct mixed-integer programming formulation and a baseline combinatorial Benders decomposition approach. 
		\end{abstract}
		
		
%
		\begin{keyword}
			Two-dimensional irregular bin packing; Combinatorial Benders decomposition;
Branch-and-price; Exact optimization; Irregular packing
			
			
			
		\end{keyword}
		
	\end{frontmatter}
	
	

\section{Introduction}

Two-dimensional irregular bin packing problems arise in a wide range of industrial applications, including sheet metal cutting, furniture production, and garment manufacturing. In these problems, a set of irregular polygons must be packed into a minimum number of identical rectangular bins while satisfying containment and non-overlap constraints \shortcite{wascher2007improved}. Compared with rectangular bin packing, the irregular case combines the combinatorial difficulty of bin assignment with the geometric complexity of feasible placement, making exact optimization particularly challenging~\shortcite{leao2020irregular,guo2022two}. In this paper, we consider the two-dimensional irregular bin packing problem in which all items are represented as convex polygons and rotations are not allowed.

The literature on irregular packing is dominated by heuristic, metaheuristic, and matheuristic approaches. For benchmark instances of irregular bin packing and related variants, previous studies have developed effective constructive heuristics, local search methods, and matheuristics~\shortcite{lopez2013effective,martinez2017matheuristics,zhang2022iteratively,guerriero2023hierarchical}. These approaches have substantially improved the ability to obtain high-quality feasible solutions for irregular packing instances. However, they do not provide optimality guarantees. The exact solvability of standard benchmark instances remains  limited, particularly in terms of obtaining provable optimality certificates.

Exact optimization has advanced much further for rectangular packing problems than for irregular packing problems. For two-dimensional rectangular bin packing problems, branch-and-price, branch-and-cut, and decomposition-based approaches have demonstrated strong capabilities by exploiting structured assignment models and effective feasibility evaluations~\shortcite{iori2021exact,cote2021combinatorial}. In particular, combinatorial Benders decomposition separates assignment decisions from geometric feasibility and iteratively adds cuts derived from infeasible packing configurations ~\shortcite{cote2021combinatorial}. 
For irregular packing problems, exact studies have mainly focused on related settings such as irregular strip packing ~\shortcite{alvarez2013branch,cherri2016robust,Cherri2019optimality,lastra2024mixed}. However, existing exact approaches for irregular bin packing problems either rely on predefined discrete placement candidates \shortcite{Cherri2019optimality}, or address related but different packing settings. Therefore, a systematic exact framework for the standard two-dimensional irregular bin packing problem, where item-to-bin assignment and irregular geometric feasibility must be handled simultaneously, remains limited.

To address this gap, we propose an exact combinatorial Benders decomposition framework for two-dimensional irregular bin packing with convex polygons. The framework separates bin assignment from geometric feasibility by coupling a pattern-based master problem with an exact single-bin oracle. 
The master problem is solved through a tailored branch-and-price procedure. Single-bin feasibility is checked exactly using a vertical-slice MIP formulation based on no-fit polygons. 
Geometric infeasibility identified by the oracle is progressively fed back into the search through valid Benders cuts.
This design enables the combinatorial and geometric components to be handled separately while maintaining exactness.

The main contributions of this paper are as follows.
First, we develop an exact combinatorial Benders decomposition framework for two-dimensional irregular bin packing, coupling a pattern-based master problem with an exact single-bin geometric feasibility oracle.
Second, we design a tailored exact branch-and-price procedure for the restricted Benders master problem, incorporating objective-layered search and adaptive exact pricing.
Third, we conduct a systematic study on 540 benchmark instances from 18 classes, proving optimality for 318 instances across 12 classes and showing stronger performance than a direct MIP formulation and a baseline combinatorial Benders approach on most of these classes.

The remainder of the paper is organized as follows. Section~\ref{sec:related} reviews the most relevant literature. Section~\ref{sec:benders-formulation} introduces the problem formulation.  Section~\ref{sec:method} presents the proposed exact decomposition method. Section~\ref{sec:experiments} reports the computational experiments and discusses the results. Section~\ref{sec:conclusion} concludes the paper and outlines future research directions.

\section{Related Work}
\label{sec:related}

\subsection{Two-Dimensional Irregular Packing and Bin Packing Problems}
Irregular packing problems form a broad family of optimization problems in which non-rectangular items must be placed inside one or multiple bins while satisfying containment and non-overlap constraints. 
A comprehensive review of mathematical models for irregular packing is given by~\shortciteA{leao2020irregular}, who emphasize the diversity of problem settings, geometric representations, and optimization objectives arising in  strip packing and bin packing problems. 
As highlighted in that review, the irregular bin packing variant is structurally distinct from strip packing and related single-bin problems because it simultaneously requires assignment decisions across multiple bins and geometric feasibility decisions within each bin.

For two-dimensional irregular bin packing, most of the existing literature has focused on heuristic, metaheuristic, or matheuristic approaches. Early work by \shortciteA{lopez2013effective} developed an effective heuristic for the problem and helped establish benchmark-oriented experimental practice. Later studies further improved solution quality by incorporating richer placement strategies, local search, hyper-heuristics, and matheuristics. In particular, \shortciteA{martinez2017matheuristics} proposed matheuristics for the irregular bin packing problem with free rotations, \shortciteA{zhang2022iteratively} developed an iteratively doubling local search for limited-rotation instances, \shortciteA{guerriero2023hierarchical} introduced a hierarchical hyper-heuristic, \shortciteA{cai2023heuristics} proposed a recent heuristic framework for two-dimensional irregular bin packing with limited rotations, and \shortciteA{luo2026permutation} developed a permutation-coded evolutionary algorithm that emphasizes global search over packing permutations and reports strong computational performance. These studies demonstrate that high-quality feasible solutions can be obtained in practice, but they do not provide exact optimality guarantees.

The present work is positioned in this line of research at the level of problem class and benchmark setting, but differs in methodology. Rather than designing another heuristic or matheuristic for benchmark instances of two-dimensional irregular bin packing, we investigate exact optimization for the standard benchmark 2DIBPP.

\subsection{Exact Methods for Packing Problems}

Exact optimization for two-dimensional irregular bin packing requires addressing two closely coupled challenges: combinatorial assignment across multiple bins and continuous geometric feasibility within each bin. 
Existing exact methods have made substantial progress on these two aspects, but largely in different problem settings. 
In particular, two lines of research are most relevant to the present study: exact methods for two-dimensional rectangular bin packing and exact formulations for irregular packing. 

The first stream concerns exact methods for  two-dimensional rectangular bin packing problems. 
\shortciteA{martello1998exact} developed one of the first exact algorithms for the two-dimensional finite bin packing problem, establishing important foundations for later branch-and-bound and decomposition approaches.
Later surveys by \shortciteA{lodi2002recent}  highlighted the central role of lower bounds, enumeration, and decomposition in exact algorithms for two-dimensional bin packing. 
More recent surveys, such as \shortciteA{iori2021exact}, show how exact methods for orthogonal packing evolved toward branch-and-bound, branch-and-price, and solver-based hybrid frameworks. 
Within this line, \shortciteA{pisinger2007using} developed a decomposition-based exact algorithm combining branch-and-price ideas with constraint programming for the two-dimensional rectangular bin packing problem. 
More recently, \shortciteA{cote2021combinatorial} proposed a combinatorial Benders decomposition approach in which a descriptive master problem is iteratively strengthened using infeasible packing patterns identified through geometric feasibility checks. 
In a different direction, \shortciteA{wang2025highly} developed a numerically exact branch-and-price algorithm for the two-dimensional rectangular bin packing problem and its guillotine variant. Their method adopts a pattern-based master formulation and generates geometrically feasible two-dimensional packing patterns through a specialized exact pricing procedure. 
These studies demonstrate that decomposition and branch-and-price can effectively handle the combinatorial structure of multi-bin packing. 
However, their geometric components rely heavily on the special structure of rectangular items, and therefore do not directly address the continuous-space feasibility of irregular polygons considered in this paper.

The second stream focuses on exact methods for irregular two-dimensional packing, especially strip packing. 
In this area, geometric feasibility is considerably harder because non-overlap must be modeled through no-fit polygons, inner-fit polygons, or related continuous geometric constructions. 
\shortciteA{alvarez2013branch} proposed an exact branch-and-bound framework for irregular cutting and packing. 
\shortciteA{cherri2016robust} developed robust mixed-integer linear programming models for the irregular strip packing problem with geometric preprocessing and strengthened formulations.
More recently, \shortciteA{lastra2024mixed} proposed exact mixed-integer formulations based on vertical slices and feasibility cuts, further advancing solver-based exact optimization for irregular strip packing. 
In addition to continuous geometric formulations, another important direction for exact irregular packing relies on discretized placement representations.
\shortciteA{Toledo2013dotted} introduced the dotted-board model, where feasible placements are restricted to a finite set of candidate positions on a discretized board. 
Following this direction, \shortciteA{Cherri2019optimality} developed constraint programming models and a global non-overlap constraint for several irregular cutting and packing variants, including the irregular bin packing problem (IBPP). 
These studies establish exact tools for irregular geometric feasibility, but they have been developed mainly for strip or single-bin settings, or for multi-bin models based on discretized placement spaces.

Overall, existing exact approaches provide strong methodological building blocks for either multi-bin combinatorial optimization or irregular geometric feasibility, but exact solution frameworks for the standard two-dimensional irregular bin packing problem remain limited.
A recent conference abstract by \shortciteA{irnich2026column} considers irregular-shape bin packing with rectangular bins of various sizes using a column-generation framework. 
In their approach, columns represent geometrically feasible nestings, while the pricing problem is first relaxed to a bounded knapsack problem.
Candidate patterns are subsequently validated by exact or heuristic nesting procedures, and infeasible candidates are excluded through no-good cuts. 
In contrast, our framework uses branch-and-price to solve the restricted Benders master problem, while geometric feasibility is handled independently by an exact single-bin oracle. 
Since only a conference abstract is currently available, a systematic description of the full methodology has not yet been published.

\subsection{Methodological Foundations}

The proposed framework builds on two main methodological foundations: Benders decomposition for separating high-level assignment decisions from lower-level feasibility decisions, and branch-and-price techniques for solving the resulting pattern-based master problem.

Classical Benders decomposition~\shortcite{benders1962partitioning} separates a problem into a master problem and one or more subproblems, but in its traditional form it is naturally suited to continuous subproblems. 
Logic-based Benders decomposition (LBBD), introduced by \shortciteA{hooker2003logic}, generalizes this idea to settings in which subproblems are combinatorial or constraint-programming models. \shortciteA{hooker2007planning} and \shortciteA{cire2016logic} showed that LBBD can be especially effective for planning and scheduling problems when subproblems become much easier after fixing high-level master decisions.
Within packing-related exact optimization, Benders decomposition has been successfully applied to rectangular packing. 
In particular, \shortciteA{cote2021combinatorial} developed a combinatorial Benders decomposition framework for two-dimensional rectangular bin packing, separating bin-assignment decisions from geometric packing feasibility.
This study motivates the use of feasibility-driven Benders decomposition in our framework.
More recently, \shortciteA{nascimento2026improving} applied logic-based Benders decomposition to a scheduling problem with two-dimensional packing. Although related in methodology, their work focuses on an integrated scheduling--packing problem rather than a pure two-dimensional irregular bin packing problem.

The master side of our framework is closely connected to exact methods for one-dimensional bin packing. A classical milestone is the branch-and-price work of \shortciteA{valerio1999exact}, which established the central role of set-covering models and column generation in exact bin packing. 
This line was later systematized by \shortciteA{delorme2016bin}, who reviewed the main mathematical models and exact algorithms for one-dimensional bin packing problems.
They clarify the relationships among pattern-based formulations, pseudo-polynomial models, and decomposition-based exact methods.
More recent work has significantly strengthened this line in two directions. On the one hand, graph-based and pseudo-polynomial formulations have been improved. 
In particular, \shortciteA{brandao2016bin} proposed a general arc-flow formulation with graph compression, while \shortciteA{delorme2020enhanced} developed enhanced pseudo-polynomial formulations for bin packing. 
On the other hand, modern branch-and-price algorithms have continued to advance exact solution methods. \shortciteA{wei2020new} proposed a branch-and-price-and-cut algorithm for one-dimensional bin packing, combining strong valid inequalities with a specialized label-setting pricing procedure. More recently, \shortciteA{baldacci2024numerically} developed a numerically exact branch-price-and-cut algorithm for bin packing, with particular attention to numerical reliability.

From a modeling perspective, each restricted Benders master problem in our framework can be viewed as a variant of the one-dimensional bin packing problem augmented with constraints that exclude geometrically infeasible item combinations. 
This connection allows branch-and-price and exact pricing techniques developed for one-dimensional bin packing to be adapted to the restricted Benders master problem. 
At the same time, unlike standard bin packing, the admissible pattern space evolves dynamically as geometric infeasibility is identified by the single-bin oracle and fed back through Benders cuts. 
The proposed framework therefore combines Benders decomposition with branch-and-price in a tightly coupled manner: the former provides the overall decomposition structure, while the latter serves as the exact solution mechanism for the dynamically strengthened master problem.

\section{Benders Decomposition Formulation}
\label{sec:benders-formulation}

Let $I=\{1,\ldots,n\}$ be the set of items and let $B=\{1,\ldots,n\}$ be
the set of potential bins. Each item $i\in I$ has area $A_i$, and each bin
has area $A$. A pattern represents a subset of items assigned to one bin.
Let $\mathcal{P}$ denote the set of all candidate item subsets satisfying the basic bin-capacity condition, and let $a_{ip}$
be equal to one if item $i$ belongs to pattern $p$, and zero otherwise.
For each pattern $p\in\mathcal{P}$, let
$S(p)=\{i\in I:a_{ip}=1\}$.

The geometric feasibility of a pattern is represented by the oracle
$\Phi(S)$, where
\[
\Phi(S)=
\begin{cases}
1, & \text{if the items in } S \text{ can be packed into one bin},\\
0, & \text{otherwise}.
\end{cases}
\]

Let $\mathcal{F}$ denote the family of all the geometrically infeasible item
subsets:
\[
    \mathcal{F}=\{S\subseteq I:\Phi(S)=0\}.
\]

We introduce binary variables $y_b$, which take value 1 if bin $b\in B$
is used, and binary variables $x_{ib}$, which take value 1 if item
$i\in I$ is assigned to bin $b\in B$. The two-dimensional irregular bin
packing problem can then be written as the following combinatorial
Benders reformulation:
\begin{align}
    \min \quad
        & \sum_{b\in B} y_b
        \label{eq:benders-obj} \\
    \text{s.t.}\quad
        & \sum_{b\in B} x_{ib} = 1,
        && \forall i\in I,
        \label{eq:benders-assign} \\
        & \sum_{i\in I} A_i x_{ib} \le A y_b,
        && \forall b\in B,
        \label{eq:benders-area} \\
        & \sum_{i\in S} x_{ib} \le |S|-1,
        && \forall b\in B,\ \forall S\in\mathcal{F},
        \label{eq:benders-feasibility-cuts} \\
        & x_{ib}\in\{0,1\},
        && \forall i\in I,\ \forall b\in B,
        \label{eq:benders-x-binary} \\
        & y_b\in\{0,1\},
        && \forall b\in B.
        \label{eq:benders-y-binary}
\end{align}

Constraint~\eqref{eq:benders-assign} ensures that each item is assigned
to exactly one bin. Constraint~\eqref{eq:benders-area} links item
assignments with bin activation and imposes the necessary area condition
for each open bin. Constraint~\eqref{eq:benders-feasibility-cuts} is the
combinatorial Benders cut family.
Whenever an item subset $S$ is
geometrically infeasible, the model forbids assigning all items in $S$ to
the same bin.

\section{Methodology}
\label{sec:method}

\subsection{Exact Combinatorial Benders Framework}
\label{subsec:benders-framework}

The family $\mathcal{F}$ is extremely large, and therefore it is neither
possible to enumerate all its elements nor practical to solve the complete
formulation with all feasibility constraints included. The formulation in
Section~\ref{sec:benders-formulation} should therefore be understood as an
implicit model. 
It characterizes the complete feasible region, but only a
small fraction of its feasibility constraints can be handled explicitly
during the solution process. 
To address this difficulty, we develop an exact combinatorial Benders decomposition that solves a restricted Benders master problem (RBMP), which contains only a subset of the
feasibility constraints, and  iteratively adds violated constraints as
needed. 
These dynamically generated constraints are referred to as
Benders cuts. Let $\mathcal{F}'\subseteq\mathcal{F}$ denote the subset of
infeasible item sets whose cuts have already been added to the RBMP. At
each iteration, the RBMP is solved over the current cut pool
$\mathcal{F}'$. The resulting solution defines a tentative assignment of
items to bins. 
For each open bin, we extract the corresponding item set
and call the single-bin geometric feasibility oracle to determine whether
this set can be packed into one bin.  
If the oracle certifies all selected item sets as feasible, the resulting solution is feasible for the complete formulation. Otherwise, a cut of the form given in constraint \ref{eq:benders-feasibility-cuts} is added to the RBMP, which is then re-solved.

Algorithm~\ref{alg:benders-framework} summarizes the resulting exact
combinatorial Benders framework, as illustrated in Figure~\ref{fig:frame}.
The algorithm first performs preprocessing, initializes the Benders cut pool, and computes a valid initial lower bound $LB_0$ , which defines the first objective layer $k$ (Lines 1 to 4).
At each iteration, the current RBMP is searched for an integer solution with objective value $k$ using the branch-and-price procedure described in Section~\ref{subsec:master-problem-solution} (Line 6). 
If no such RBMP solution exists, the current objective layer is certified infeasible and $k$ is increased by one (Lines 7 to 9).
Otherwise, the item set assigned to each open bin is extracted and checked by the exact single-bin feasibility oracle described in Section~\ref{subsec:single-bin-subproblem} (Line 10). 
If any selected item set is geometrically infeasible, the detected infeasible subsets are used to generate Benders feasibility cuts, which are added to the cut pool, and the associated pattern and pricing information is updated before the RBMP is searched again at the same objective layer (Lines 12 to 15 and Lines 19 to 21). 
If all selected item sets are certified feasible, the current solution is feasible for the complete formulation (Lines 16 and 17). 
Since $k$ is a valid global lower bound and all smaller objective layers have already been excluded, the solution is globally optimal and the algorithm terminates.
\begin{algorithm}[htb]
\caption{Exact combinatorial Benders framework.}
\label{alg:benders-framework}
\begin{algorithmic}[1]
\STATE Perform preprocessing and initialize the cut pool $\mathcal{F}'$.
\STATE Build the initial restricted Benders master problem (RBMP).
\STATE Compute a valid initial lower bound $LB_0$.
\STATE Set the current objective layer $k \gets LB_0$.

\WHILE{\textsc{True}}
    \STATE Search the current RBMP for an integer solution with objective value $k$
    using exact branch-and-price.
    
    \IF{no integer RBMP solution exists at objective layer $k$}
        \STATE Set $k \gets k+1$.
        \STATE \textbf{continue}.
    \ENDIF

    \STATE Extract the item set $S_b$ assigned to each open bin $b$.
    \STATE Set $\mathcal{V} \gets \emptyset$.

    \FOR{each selected open bin $b$}
        \STATE Call the exact single-bin feasibility oracle on $S_b$.
        \IF{$S_b$ is geometrically infeasible}
            \STATE Add the detected infeasible subset $\bar{S}_b$ to $\mathcal{V}$.
        \ENDIF
    \ENDFOR

    \IF{$\mathcal{V}=\emptyset$}
        \STATE Return the current RBMP solution as an optimal solution.
    \ELSE
        \STATE Select feasibility cuts $\mathcal{S}_{cut}$ based on $\mathcal{V}$.
        \STATE Update $\mathcal{F}' \gets \mathcal{F}' \cup S_{cut}$.
        \STATE Update the pattern pool and pricing information accordingly.
    \ENDIF
\ENDWHILE
\end{algorithmic}
\end{algorithm}

\subsubsection{Preprocessing and Initial Bounds}
\label{subsubsec:preprocessing-bounds}

Before solving the restricted Benders master problem, we initialize the
cut pool and the first lower bound used by the
objective-layered search. These preprocessing steps do not change the
feasible region of the complete formulation. They only add feasibility
cuts that are certified by the exact single-bin oracle and construct a
safe initial restricted master problem.

First, we identify pairwise geometric incompatibilities. For every pair
of items $\{i,j\}$, the exact single-bin feasibility oracle described in
Section~\ref{subsec:single-bin-subproblem} is called to determine whether
the two items can be packed together in one bin. If
$\Phi(\{i,j\})=0$, then the pair cannot appear in the same selected
pattern. The corresponding infeasible pair is added to the initial cut
pool:
\begin{equation}
    \mathcal{F}^{\mathrm{pair}}
    =
    \bigl\{\{i,j\}: i,j\in I,\ i<j,\ \Phi(\{i,j\})=0\bigr\}.
    \label{eq:pairwise-incompatibility}
\end{equation}
Since each pairwise cut is certified by the exact geometric oracle, this
step removes only patterns that are infeasible in the complete
formulation.

The initial restricted Benders master problem is then solved using the preprocessing cut pool
$\mathcal{F}^{\mathrm{pair}}$. During this solve, patterns may
be generated by the pricing procedure, while all generated patterns are
subject to the currently available Benders cuts. Let $z_{\mathrm{RBMP}}^0$ denote the optimal
value of this initial restricted master problem. Since the RBMP contains
only a subset of the complete Benders feasibility cuts, its optimal value
provides a valid lower bound on the original problem:
\begin{equation}
    LB_0 = \left\lceil z_{\mathrm{RBMP}}^0-\varepsilon \right\rceil,
    \label{eq:initial-lower-bound}
\end{equation}
where $\varepsilon$ is a small numerical tolerance.
This initial lower bound defines the starting objective level for the
objective-layered search described in
Section~\ref{subsubsec:objective-layered}.
\subsubsection{Combinatorial Benders Cuts and Geometric Feedback}
\label{subsubsec:benders-cuts-feedback}

Since enumerating all such infeasible subsets in advance is impractical, the algorithm maintains a dynamic cut pool and adds feasibility cuts only when the corresponding infeasible subsets are detected.
\begin{figure}[H]
    \centering
    \includegraphics[width=0.8\textwidth]{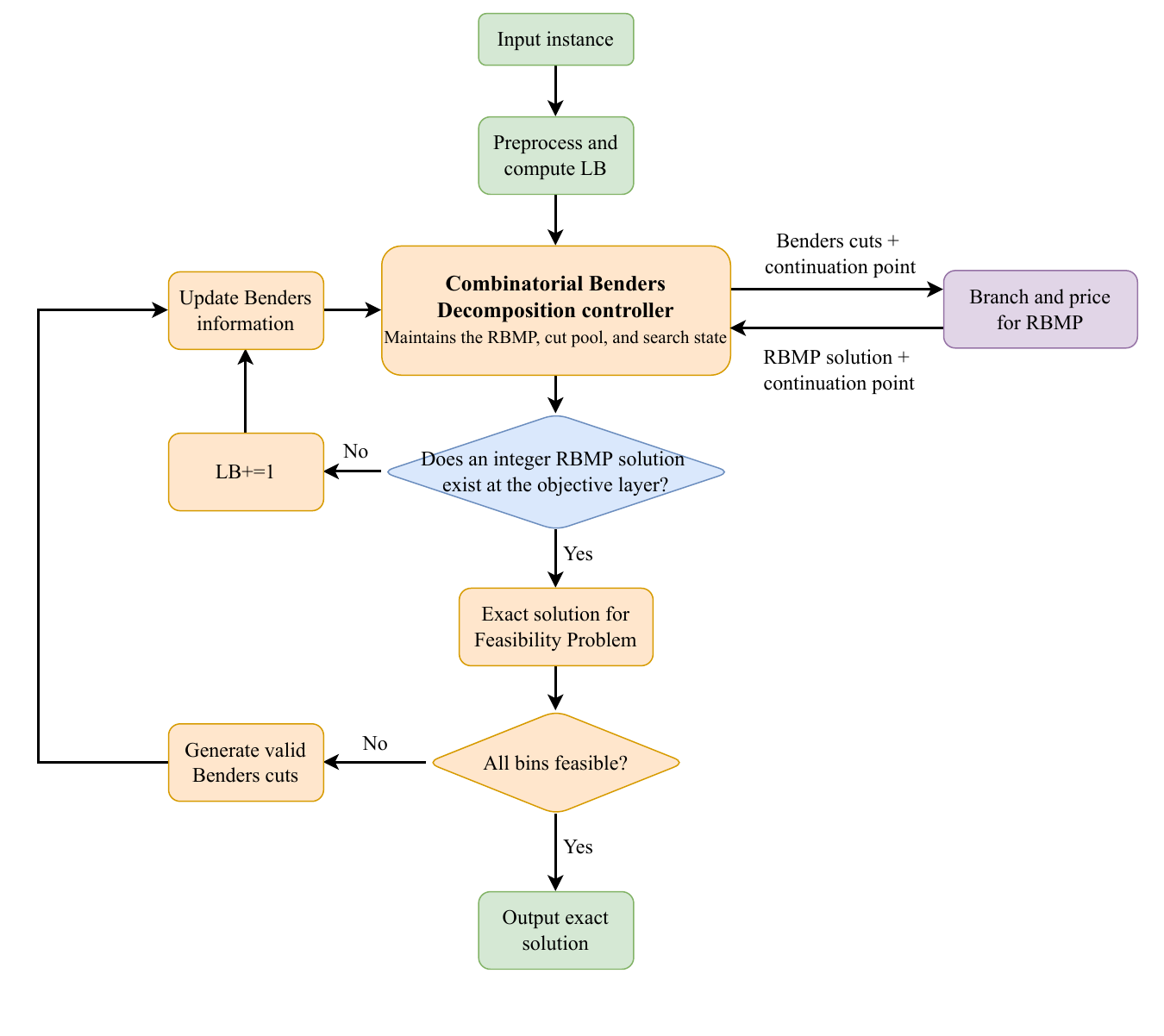}
    \caption{Overall structure of the proposed exact combinatorial Benders framework.}
    \label{fig:frame}
\end{figure}

Cuts are generated from integer solutions of the current RBMP. Suppose
that the RBMP returns a candidate solution with selected bins
$\mathcal{B}^{+}$. For each selected bin $b\in\mathcal{B}^{+}$, let
\begin{equation}
    S_b=\{i\in I:\text{item }i\text{ is assigned to bin }b\}
    \label{eq:selected-bin-item-set}
\end{equation}
be the item set assigned to that bin. The exact single-bin feasibility
oracle is then called on selected candidate bins. If $\Phi(S_b)=1$, the
bin is certified as geometrically feasible. If $\Phi(S_b)=0$, then the
algorithm derives an infeasible item set $\bar{S}_b\subseteq S_b$ and
adds it to the cut pool:
\begin{equation}
    \mathcal{F}' \leftarrow \mathcal{F}'\cup\{\bar{S}_b\}.
    \label{eq:dynamic-cut-pool-update}
\end{equation}
The corresponding Benders cut forbids every future pattern that contains
$\bar{S}_b$.
Figure \ref{fig:cut} illustrates how geometric infeasibility identified by the single-bin oracle is transformed into a combinatorial Benders cut that excludes all patterns containing the certified infeasible subset.
\begin{figure}[H]
    \centering
    \includegraphics[width=\textwidth]{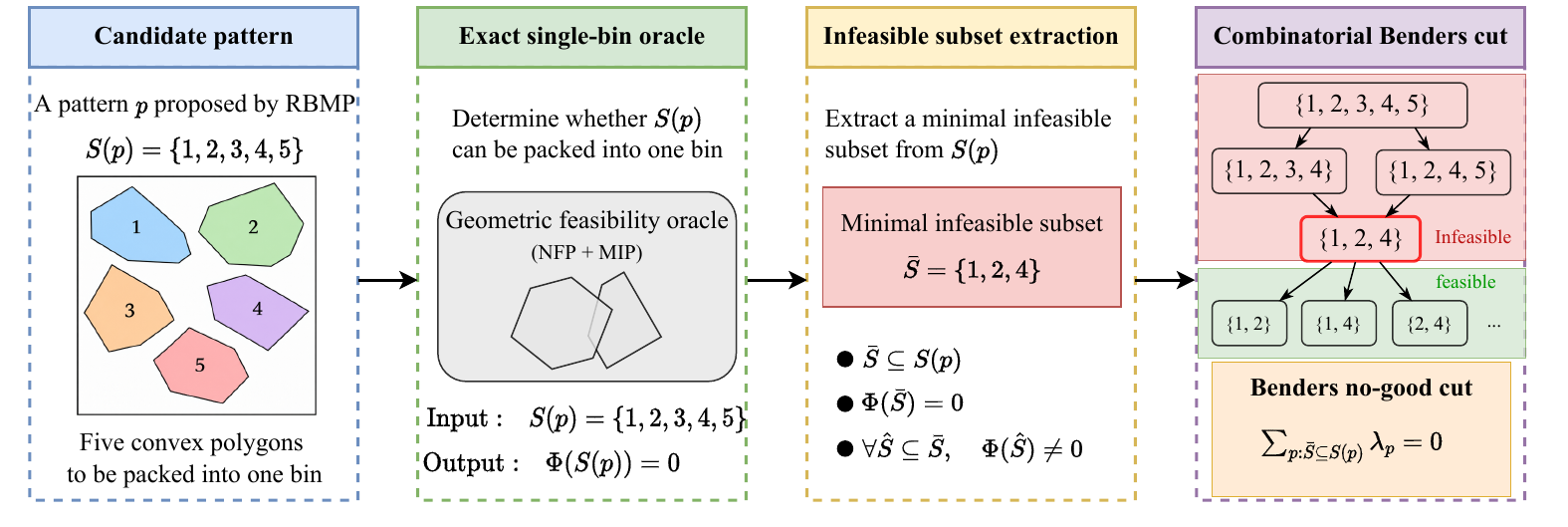}
    \caption{Geometric infeasibility feedback in the combinatorial Benders framework.}
    \label{fig:cut}
\end{figure}

In our approach, the cut-generation process follows a prioritized strategy. Candidate bins are first ordered by the number of assigned items, so that bins containing fewer items are checked earlier.
The number of inserted
infeasible cuts is limited by
\begin{equation}
    N_{cut}(\mathcal{B}^{+})
    =
    \max\left\{
        \left\lfloor \frac{|\mathcal{B}^{+}|}{2} \right\rfloor,
        1
    \right\}.
    \label{eq:limited-cut-number}
\end{equation}
The limit is imposed on the number of detected infeasible item sets,
rather than directly on the number of oracle calls. Therefore, the
algorithm may check more than $N_{cut}(\mathcal{B}^{+})$ candidate bins if some
checked bins are feasible, but it inserts at most $N_{cut}(\mathcal{B}^{+})$
new infeasibility cuts in one call. This strategy reduces the cost of
repeated exact single-bin feasibility checks while still providing
geometric feedback to the master problem. 
For infeasible bins with larger cardinality,
the whole item set $S_b$ is directly used as a no-good Benders cut. For
small infeasible bins, the algorithm applies an iterative deletion-based
refinement procedure. Starting from $S_b$, it attempts to remove one item
at a time and rechecks the remaining set with the single-bin feasibility
oracle. If the remaining set is certified infeasible, the item is
permanently removed and the procedure is repeated.
The resulting set $\bar{S}_b$ satisfies
\begin{equation}
    \bar{S}_b \subseteq S_b,
    \qquad
    \Phi(\bar{S}_b)=0,
\end{equation}
and is therefore sufficient to generate a valid Benders feasibility cut.
When all refinement oracle calls are solved to certification, the
procedure returns an inclusion-minimal infeasible subset. In general, the
refinement is used as a cut-strengthening procedure; the validity of the
generated cut follows from the certified infeasibility of the final
subset.
Smaller cuts are stronger because they forbid more infeasible patterns,
whereas whole-set cuts are cheaper to generate. Prioritizing candidate
item sets with smaller cardinality further increases the chance of
obtaining compact cuts, because fewer oracle rechecks are needed when
attempting to remove items from a smaller set. In our implementation, an
infeasible item set is considered to have small cardinality if it contains
fewer than eight items.

After new cuts are generated, they are inserted into the current cut pool, and the current pattern pool is filtered. Given the updated cut pool, a pattern \(p\) is admissible only if it does not contain any known infeasible subset:

$$
S \nsubseteq S(p),
\quad \forall S\in\mathcal{F}'.
$$
Equivalently, for each known infeasible subset \(S\in\mathcal{F}'\), all patterns containing \(S\) are forbidden by the Benders feasibility cut.
Accordingly, patterns containing newly detected infeasible subsets are removed, and the updated cut pool is enforced in the pricing procedure. 
As a result, these infeasible item combinations are excluded from subsequent master solutions and newly generated columns.

This mechanism forms the geometric feedback loop of the proposed framework. The RBMP proposes combinatorial item groupings, while the exact single-bin oracle certifies their geometric feasibility. Whenever an infeasible grouping is identified, the corresponding Benders cut is fed back to the master and subsequent pricing problems.

\subsection{Single-Bin Geometric Feasibility Subproblem}
\label{subsec:single-bin-subproblem}

The subproblem in our Benders framework is an exact single-bin geometric
feasibility problem. Given an item subset $S\subseteq I$, the oracle
determines whether all items in $S$ can be placed inside one bin without
overlap, while keeping their orientations fixed. This subproblem is used only as a feasibility oracle.
It does
not optimize the number of bins.

For each item $i\in S$, let $(u_i,v_i)$ denote the translation of its
reference point. Before building the feasibility model, the input
polygons are normalized and item-wise translation bounds are computed
with respect to the bin dimensions. Let $W$ and $H$ be the bin width and
height. The bounds
\begin{equation}
    \underline{u}_i \le u_i \le \overline{u}_i,
    \qquad
    \underline{v}_i \le v_i \le \overline{v}_i,
    \quad \forall i\in S,
    \label{eq:single-bin-placement-bounds}
\end{equation}
ensure that each individual item remains inside the bin. These bounds
define the domain of the placement variables used by the single-bin
oracle.
The non-overlap constraints are modeled using no-fit-polygon information.
For each pair of items $i,j\in S$, $i<j$, the relative placement vector
is
\begin{equation}
    \delta_{ij}=(u_j-u_i,\ v_j-v_i).
    \label{eq:relative-placement-vector}
\end{equation}
The items $i$ and $j$ do not overlap if $\delta_{ij}$ belongs to the
feasible region outside the corresponding no-fit polygon. Following the
vertical-slice formulation of ~\shortciteA{lastra2024mixed},
this feasible relative-placement region is decomposed into a finite set
of disjoint linear regions. Let $\mathcal{R}_{ij}$ denote the set of
candidate regions for pair $(i,j)$, and let $\eta_{ijr}$ be a binary
variable equal to one if region $r\in\mathcal{R}_{ij}$ is selected. The
oracle imposes
\begin{equation}
    \sum_{r\in\mathcal{R}_{ij}} \eta_{ijr}=1,
    \quad \forall i,j\in S,\ i<j,
    \label{eq:single-bin-region-selection}
\end{equation}
so that exactly one feasible relative-placement region is selected for
each item pair. If $\eta_{ijr}=1$, the corresponding linear inequalities
of region $r$ are enforced on $\delta_{ij}$. These constraints ensure
pairwise non-overlap between all items in $S$.

The resulting model is a feasibility MIP with continuous placement
variables and binary region-selection variables. Its objective is set to
zero:
\begin{equation}
    \min \ 0.
    \label{eq:single-bin-zero-objective}
\end{equation}
If the model is feasible, the oracle returns $\Phi(S)=1$ together with a
valid placement of all items in $S$. If the model is proven infeasible,
the oracle returns $\Phi(S)=0$, and the corresponding item set can be
used to generate a Benders feasibility cut.

Our implementation uses a compact version of the vertical-slice model.
The compact formulation preserves the same feasibility interpretation
but reduces redundant variables and constraints in the pairwise
non-overlap model. In addition, symmetry-breaking constraints are imposed on identical items, while triplet-based feasibility cuts derived from the vertical-slice representation are generated at the root node via a user-cut callback.
 These enhancements are used only to accelerate the exact feasibility check.

Since the single-bin model follows an existing exact MIP formulation for
irregular packing, we do not reproduce all geometric constraints here.
The detailed construction of the vertical-slice decomposition, the
corresponding linear inequalities, and the associated valid cuts can be
found in the work of ~\shortciteA{lastra2024mixed}. 

\subsection{Master Problem Solution}
\label{subsec:master-problem-solution}

The master problem in the proposed Benders framework is a restricted model over item subsets. Given the current cut pool
$\mathcal{F}'$, the RBMP selects a collection of candidate
patterns such that every item is covered exactly once and no selected
pattern contains a known infeasible subset. Therefore, once the geometric
cuts in $\mathcal{F}'$ are fixed, the master problem is essentially a
one-dimensional bin packing problem with additional infeasible-subset
constraints:
\begin{align}
    \min \quad
        & \sum_{p\in\mathcal{P}} \lambda_p
        \label{eq:rbmp-master-obj} \\
    \text{s.t.}\quad
        & \sum_{p\in\mathcal{P}} a_{ip}\lambda_p = 1,
        && \forall i\in I,
        \label{eq:rbmp-master-cover} \\
        & \sum_{p\in\mathcal{P}: S\subseteq S(p)} \lambda_p = 0,
        && \forall S\in\mathcal{F}',
        \label{eq:rbmp-master-cuts} \\
        & \lambda_p\in\{0,1\},
        && \forall p\in\mathcal{P}.
        \label{eq:rbmp-master-binary}
\end{align}
Here, $\mathcal{P}$ denotes the complete set of candidate item subsets satisfying the basic bin-capacity condition. 
Constraint
\eqref{eq:rbmp-master-cover} assigns each item to exactly one selected
pattern, while Constraint~\eqref{eq:rbmp-master-cuts} prevents the master
problem from selecting any pattern that contains a known geometrically
infeasible subset.

Although this model has a simple combinatorial structure, directly
solving it is difficult for the benchmark instances considered in this
paper. Many irregular bin packing benchmarks are puzzle-like instances, where optimal packings can be highly constrained and geometrically feasible item groupings are difficult to identify. As a result, a basic compact
formulation or a direct enumeration of patterns tends to explore a large
number of geometrically invalid or irrelevant item combinations before
reaching the optimal packing structure.

To address this difficulty, we solve the RBMP by a pattern-based
branch-and-price procedure and embed this procedure inside the Benders
decomposition framework. The branch-and-price algorithm avoids
enumerating all patterns in advance. Instead, it starts from a small
initial pattern set and generates promising patterns dynamically through
pricing. Whenever the Benders oracle identifies a new infeasible item
subset, the corresponding cut is added to the RBMP and is also passed to
the pricing problem, so that future generated patterns respect the
current geometric feedback. In this way, the branch-and-price procedure
solves the combinatorial master problem, while the Benders loop
progressively refines the master problem with exact geometric
information.
\subsubsection{Branch-and-Price Framework for RBMP}
\label{subsubsec:branch-and-price-framework}

We solve the RBMP by a pattern-based branch-and-price framework adapted
from the exact branch-and-price-and-cut algorithm of
~\shortciteA{wei2020new} for one-dimensional bin packing and bin
packing with conflicts. In our approach, the role of this procedure is to solve
the restricted Benders master problem under the current geometric cut
pool $\mathcal{F}'$.

At each node of the branch-and-price tree, we solve the linear relaxation 
of the current restricted master problem over a limited column set
$\mathcal{P}'\subseteq\mathcal{P}$. The node relaxation is
\begin{align}
    \min \quad
        & \sum_{p\in\mathcal{P}'} \lambda_p
        \label{eq:node-rmp-obj} \\
    \text{s.t.}\quad
        & \sum_{p\in\mathcal{P}'} a_{ip}\lambda_p = 1,
        && \forall i\in I,
        \label{eq:node-rmp-cover} \\
        & \lambda_p \ge 0,
        && \forall p\in\mathcal{P}'.
        \label{eq:node-rmp-nonnegative}
\end{align}
The above model can be regarded as the restricted master problem in the branch-and-price procedure.
Patterns $\mathcal{P}'$ are added by solving the pricing problems.
Since $\mathcal{P}'$ contains only patterns that are admissible under the current Benders cut pool $\mathcal{F}'$, the corresponding feasibility cuts need not be written explicitly in the node RMP.

After solving the node relaxation, the dual values of the covering
constraints are passed to the pricing problem. The pricing problem
searches for an admissible pattern with negative reduced cost. In its
basic form, this is a one-dimensional knapsack-type problem:
\begin{align}
    \min \quad
        & 1-\sum_{i\in I} \pi_i q_i
        \label{eq:pricing-reduced-cost} \\
    \text{s.t.}\quad
        & \sum_{i\in I} A_i q_i \le A,
        \label{eq:pricing-area} \\
        & \sum_{i\in S} q_i \le |S|-1,
        && \forall S\in\mathcal{F}',
        \label{eq:pricing-benders-cuts} \\
        & q_i\in\{0,1\},
        && \forall i\in I.
        \label{eq:pricing-binary}
\end{align}
Here, $\pi_i$ is the dual value associated with the covering constraint
of item $i$, $A_i$ is the area of item $i$, and $A$ is the bin area.
Constraint~\eqref{eq:pricing-area} ensures that the total area of the items does not exceed the area of the bin.
Constraint~\eqref{eq:pricing-benders-cuts} prevents the pricing problem
from regenerating item combinations that have already been certified
geometrically infeasible. 
These constraints define the admissible pattern space of the current RBMP and thereby embed the geometric feedback from the Benders oracle directly into the column-generation process.
If the pricing problem finds a pattern with
negative reduced cost, the corresponding column is added to
$\mathcal{P}'$, and the node relaxation is solved again. Otherwise, the
current LP relaxation is solved to optimality for the current node.
During the branch-and-price process, branching-induced restrictions on admissible patterns are also incorporated into the pricing problem so that newly generated columns remain consistent with the branching restrictions at the current node.

The pricing problem is solved using a label-setting dynamic programming procedure adapted from the general approach of~\shortciteA{wei2020new}. 
The basic label representation and dominance principles follow standard techniques for this class of pricing problems, while several modifications are introduced to fit the proposed framework. 
In particular, the label-setting procedure incorporates the current Benders cut pool so that labels leading to known infeasible item combinations can be discarded during pricing. 
In addition, an adaptive pricing strategy is employed to balance pricing efficiency and exactness, as described in Section~\ref{subsubsec:adaptive-pricing}.

If the solution of the node relaxation is integer, the selected patterns
define a candidate bin assignment. This candidate is then passed to the
single-bin geometric feasibility oracle described in
Section~\ref{subsec:single-bin-subproblem}. 
If the node relaxation is fractional, we branch on a pair of items. 
 For each item pair $(i,j)$, the algorithm
computes its co-assignment value in the current fractional RMP solution:
\begin{equation}
    \rho_{ij}
    =
    \sum_{p\in\mathcal{P}':\, i,j\in S(p)}
    \lambda_p .
    \label{eq:pair-coassignment-reward}
\end{equation}
A pair with fractional co-assignment value is selected for branching. In
the implementation, the selected pair is the one whose value is closest
to a prescribed target value:
\begin{equation}
    (i^*,j^*)
    \in
    \arg\min_{i\ne j}
    \left|\rho_{ij}-\tau\right|,
    \label{eq:branch-pair-selection}
\end{equation}
where $\tau$ is set to $0.55$ by default. Ties are broken by preferring
larger item-group weight. The two child nodes then enforce, respectively,
that the selected pair must be packed together or must be packed in
different bins. 
The first child enforces that $i$ and $j$ must be packed
together, while the second child enforces that they must be packed in
different bins. 
This branching rule preserves the structure of the pricing problem because the two branching decisions can be represented either by merging item groups or by adding an incompatibility constraint, without changing the basic form of the pricing model.
In addition, several problem-specific enhancements are introduced into the branching procedure to exploit the structure of the RBMP and the geometric feedback mechanism, as detailed in Section~\ref{subsubsec:objective-layered}.

\subsubsection{Objective-Layered Search}
\label{subsubsec:objective-layered}

The branch-and-price procedure is embedded in an objective-layered search
scheme, in which each master-problem solve is restricted to a fixed target
number of bins, allowing branch-and-price nodes whose lower bounds exceed
the current objective layer to be fathomed immediately.
After the initial RBMP is solved, its objective value is rounded
up to obtain an initial lower bound:
\begin{equation}
    LB_0=\left\lceil z_{\mathrm{RBMP}}^0-\varepsilon \right\rceil .
    \label{eq:initial-layer-bound}
\end{equation}
The algorithm then searches the objective values sequentially, starting
from $LB_0$. At each objective layer $k$, the branch-and-price procedure
attempts to find an integer master solution with objective value exactly equal to $k$. If such a solution is found, the selected patterns are
passed to the exact single-bin feasibility oracle. If all selected bins
are geometrically feasible, the solution is accepted as an optimal
solution. Otherwise, the infeasible item sets detected by the oracle are
added as Benders cuts, and the master problem is solved again with the
updated cut pool.
The objective layers are explored in the order
\begin{equation}
    k = LB_0, LB_0+1, LB_0+2,\ldots .
    \label{eq:objective-layer-order}
\end{equation}
In each layer $k$, the optimal objective value of the current restricted master problem is also used for node fathoming. Let
$z_{\mathrm{RMP}}^*$ denote the optimal objective value of the current
RMP, and let $c^*<0$ denote the minimum reduced cost returned by the
exact pricing problem. The current
branch-and-price node can be safely fathomed if
\begin{equation}
    z_{\mathrm{RMP}}^* > k(1-c^*).
\end{equation}
Indeed, this condition is equivalent to
$z_{\mathrm{RMP}}^* + k c^* > k$, while
$z_{\mathrm{RMP}}^* + k c^*$ is a valid lower bound on the full LP
relaxation at the current node. The validity of this fathoming rule is
established in Lemma~1 of Appendix~A.3.
This strategy is useful for the benchmark instances considered in this
paper, where the optimal number of bins is a small integer, but the
corresponding feasible geometric arrangement is highly constrained. By
checking one objective level at a time, the algorithm avoids searching
solutions with larger numbers of bins before all smaller objective levels
have been ruled out.

When an integer RBMP solution is found during the branch-and-price search,
the algorithm records the current search state and passes the selected bins
to the geometric feasibility oracle. If an infeasible bin is detected, the
corresponding infeasible item set is added to the current cut pool, and the
pattern pool is updated accordingly. The strengthened RBMP is then re-solved
from the recorded continuation point, rather than restarting the
branch-and-price procedure from the root. In this way, Benders cut generation
is embedded directly into the branch-and-price search, and the previously
explored search state is retained whenever new geometric information is
introduced.
This integration is valid because adding Benders feasibility cuts cannot
create new feasible master solutions. Let \(\Omega(\mathcal{F}')\) denote the feasible region at an arbitrary node of the branch-and-price tree used to solve the RBMP under the current cut pool \(\mathcal{F}'\).
 If a new infeasible item set $\bar{S}$ is added, then
\begin{equation}
    \Omega(\mathcal{F}'\cup\{\bar{S}\})
    \subseteq
    \Omega(\mathcal{F}').
    \label{eq:cut-pool-monotonicity}
\end{equation}
Therefore, for a fixed objective layer, any part of the branch-and-price tree that has already been fathomed remains fathomed after additional Benders cuts are inserted. 
The cuts only remove candidate
patterns and candidate integer solutions. 
They do not introduce new
patterns or improve a previously excluded branch into a feasible optimal
solution. 
Previously generated columns are likewise reused whenever they remain admissible under the strengthened cut pool, while columns violating newly added cuts are discarded.

\subsubsection{Adaptive-Pricing}
\label{subsubsec:adaptive-pricing}

In addition to branching, the efficiency of the branch-and-price search also depends critically on the pricing procedure.
The pricing problem is solved exactly at every column-generation iteration.
However, the computational performance of the label-setting dynamic programming
algorithm varies substantially across instances and pricing states. We therefore
employ an adaptive exact pricing strategy that switches between two exact
dynamic-programming procedures. The fast procedure is used initially and orders
item groups in non-increasing order of the ratio between their current dual value
and area. The lower-bound-enhanced procedure constructs an additional
lower-bound table and retains the original item-group order \shortcite{wei2020new}. Both procedures
solve the same pricing problem and enforce the same capacity, branching,
Benders-incompatibility, and duplicate-exclusion constraints. Hence, switching
between them affects only the search process and does not alter the feasible
region or the reduced-cost definition.

At the beginning of each instance, the lower-bound-enhanced procedure is
executed twice for timing calibration.  The first execution serves as a warm-up, while the lower-bound-table construction time measured during the second execution is denoted by \(C_{\mathrm{cal}}\).  If this measurement is unavailable or nonpositive, a fallback estimate is computed as
\begin{equation}
    \widehat{C}
    =
    c_{0}
    +
    c_{1}
    \bigl(|G|+1\bigr)
    \bigl(\lfloor WH \rfloor+1\bigr),
    \label{eq:pricing-fallback-cost}
\end{equation}
where \(G\) is the current set of item groups, and \(W\) and \(H\) are the
bin dimensions. Thus, the calibrated unit cost is defined as
\begin{equation}
    U =
    \begin{cases}
        C_{\mathrm{cal}},
        & \text{if } C_{\mathrm{cal}} > 0,\\[2mm]
        \widehat{C},
        & \text{otherwise}.
    \end{cases}
    \label{eq:pricing-unit-cost}
\end{equation}
For the \(t\)-th pricing call, the switching threshold is then defined as
\begin{equation}
    T_t
    =
    \max\left\{
        0,\,
        \alpha t U + \beta
    \right\},
    \label{eq:pricing-switch-threshold}
\end{equation}
with
\[
    c_{0}=0.001,\qquad
    c_{1}=2\times10^{-8},\qquad
    \alpha=0.6,\qquad
    \beta=0.
\]
During the \(t\)-th fast pricing call, let
\(C_{t-1}^{\mathrm{fast}}\) denote the cumulative running time of all
previously completed fast pricing calls and let \(e_t\) denote the elapsed
time of the current call. If
\begin{equation}
    C_{t-1}^{\mathrm{fast}} + e_t > T_t,
    \label{eq:pricing-switch-condition}
\end{equation}
the current fast execution is terminated without using its incomplete result,
and the same pricing problem is restarted from the beginning using the
lower-bound-enhanced procedure, which is then retained for all subsequent
pricing calls of the instance. Since an interrupted fast execution is never
used to certify the absence of a negative reduced-cost column, and the pricing
problem is subsequently solved exactly by the lower-bound-enhanced procedure,
the adaptive switching strategy preserves the exactness of the
column-generation process.

The pricing problem is solved through a binary search structure in which each level corresponds to an item group, and the two child nodes represent whether the next group is selected or excluded. 
To reduce the number of nodes explored, a lightweight optimistic bound is first evaluated for each label before switching to the precomputed knapsack bound. 
Given residual area \(Q\), a fractional-knapsack bound \(\Pi_{\mathrm{frac}}\) is obtained by considering the remaining groups in decreasing dual-value-to-area ratio and allowing the last group to be selected fractionally. Independently, let \(A_{\min}\) be the smallest positive area among the remaining groups. At most \(\lfloor Q/A_{\min}\rfloor\) such groups can be selected; summing the corresponding largest positive dual values gives a cardinality bound \(\Pi_{\mathrm{card}}\). Positive-dual groups with zero area are included in both bounds. We use

$$
\Pi=\min\{\Pi_{\mathrm{frac}},\Pi_{\mathrm{card}}\},
$$
and cache it by the remaining-group index and residual area. 
Both bounds relax the known incompatibility cuts, so \(\Pi\) remains an upper bound on the dual value of any feasible continuation. A label is pruned if the sum of its accumulated dual value and \(\Pi\) does not exceed \(1\), or if the corresponding reduced-cost lower bound cannot improve the best pattern found so far. 
The search procedure based on the precomputed knapsack table follows~\shortciteA{wei2020new}.

\subsection{Exactness }
\label{subsec:exactness-implementation}

The proposed algorithm is exact and returns an optimal solution to the complete Benders formulation in Section~\ref{sec:benders-formulation} upon termination. Since this formulation is equivalent to the original two-dimensional irregular bin packing problem, the returned solution is also globally optimal for the original problem. The main exactness results are stated below, and the detailed proofs are provided in ~\ref{app:exactness-proof}.

\begin{proposition}[Validity of Benders feasibility cuts]
\label{prop:valid-benders-cuts}
Let $S\subseteq I$ be an item subset such that the single-bin feasibility
oracle proves $\Phi(S)=0$. Then the cut
\begin{equation}
    \sum_{p\in\mathcal{P}:S\subseteq S(p)} \lambda_p = 0
    \label{eq:validity-cut}
\end{equation}
is valid for the complete Benders formulation.
\end{proposition}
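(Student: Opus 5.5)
The plan is to show that every feasible solution of the complete formulation satisfies cut \eqref{eq:validity-cut}. Since the cut is stated in the pattern variables $\lambda_p$, I will argue in the pattern-based form of the complete formulation. This is model \eqref{eq:rbmp-master-obj}--\eqref{eq:rbmp-master-binary} with $\mathcal{F}'$ replaced by $\mathcal{F}$, so that constraint \eqref{eq:rbmp-master-cuts} runs over all geometrically infeasible subsets. I will also tie it to the assignment form \eqref{eq:benders-obj}--\eqref{eq:benders-y-binary}, so that the statement holds whichever form is meant. First I record the link between the two forms. An integer assignment $(x,y)$ induces patterns $S_b=\{i: x_{ib}=1\}$ for the open bins, with $\lambda_p=1$ exactly when $S(p)=S_b$ for some open bin $b$. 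Conversely, each selected pattern defines one bin. Constraint \eqref{eq:benders-feasibility-cuts} for a set $S$ and bin $b$ is then equivalent to $S\not\subseteq S_b$.

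The core step uses the hypothesis that the oracle proves $\Phi(S)=0$, which by definition means $S\in\mathcal{F}$. Take any feasible solution and suppose some pattern $p$ with $S\subseteq S(p)$ has $\lambda_p=1$. Then the corresponding bin $b$ has $\sum_{i\in S}x_{ib}=|S|$, which violates \eqref{eq:benders-feasibility-cuts} for the pair $(b,S)$. In the pattern form, the same selection violates the $S$-constraint of \eqref{eq:rbmp-master-cuts}. Hence every feasible solution has $\lambda_p=0$ for all $p$ with $S\subseteq S(p)$. Because $\lambda_p\ge 0$, this is exactly the equality \eqref{eq:validity-cut}.

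To avoid relying only on the formulation, I will add a geometric check that the cut removes no truly feasible packing. I will argue monotonicity: if $S\subseteq T$ and the items of $T$ can be placed in one bin, then restricting that placement to the items of $S$ keeps containment and non-overlap. So $\Phi(T)=1$ implies $\Phi(S)=1$, and by contraposition $\Phi(S)=0$ implies $\Phi(T)=0$ for every superset $T$. Consequently every pattern the cut forbids is itself geometrically infeasible, and the cut removes no packing of the original problem.

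The main obstacle is the meaning of ``the oracle proves $\Phi(S)=0$''. That phrase must coincide with true geometric infeasibility, which requires the vertical-slice MIP of Section~\ref{subsec:single-bin-subproblem} to be an exact model of single-bin feasibility. The symmetry-breaking constraints and triplet cuts must not remove every feasible placement. I will invoke the exactness of the formulation of \shortciteA{lastra2024mixed} together with the fact that these enhancements keep at least one feasible placement whenever one exists. Symmetry breaking on identical items only permutes placements, and the triplet cuts are valid inequalities. With that, proven infeasibility of the MIP gives $S\in\mathcal{F}$, and the argument above applies.
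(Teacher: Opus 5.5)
Your proof is correct and follows essentially the same argument as the paper: since $\Phi(S)=0$, no bin of a feasible solution can contain all items of $S$, so every pattern $p$ with $S\subseteq S(p)$ must have $\lambda_p=0$. You are more explicit than the paper in two places. You spell out the heredity property (a subset of a packable set is packable), which the paper uses implicitly when it says no selected bin may contain $S$. You also discuss why the oracle is exact, whereas the paper simply builds this into the definition of $\Phi$.
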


\begin{proposition}[Previously searched branches remain fathomed]
\label{prop:searched-branches-remain-fathomed}
For a fixed objective layer, adding valid Benders feasibility cuts cannot
make a previously searched and fathomed branch become capable of
producing an optimal solution at that layer.
\end{proposition}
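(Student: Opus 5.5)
The argument is a monotonicity argument on feasible regions, combined with a case split on why a branch was fathomed. Fix an objective layer $k$ and a node $\nu$ of the branch-and-price tree that was searched and fathomed under cut pool $\mathcal{F}'$. Let $\Omega_\nu(\mathcal{F}')$ be the set of integer master solutions compatible with the branching restrictions at $\nu$ whose patterns contain no set of $\mathcal{F}'$, and let $\Omega^{LP}_\nu(\mathcal{F}')$ be the corresponding LP relaxation over all admissible columns. Adding a set $\bar S$ that is certified infeasible, which gives a valid cut by Proposition~\ref{prop:valid-benders-cuts}, changes the admissible pattern set from $\mathcal{P}(\mathcal{F}')=\{p: S\nsubseteq S(p)\ \forall S\in\mathcal{F}'\}$ to $\mathcal{P}(\mathcal{F}'\cup\{\bar S\})\subseteq\mathcal{P}(\mathcal{F}')$. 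The first step is to check that the branching restrictions (pair together / pair apart) are defined independently of the cut pool. It then follows that both $\Omega_\nu$ and $\Omega^{LP}_\nu$ can only shrink, which is exactly \eqref{eq:cut-pool-monotonicity}.

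The proof then splits by fathoming reason.

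\textbf{Infeasibility.} If $\nu$ was fathomed because its relaxation was infeasible, the shrunken relaxation remains infeasible.

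\textbf{Bound.} If $\nu$ was fathomed by bound, there are two sub-cases, each giving a bound that stays above $k$ after the cuts.
\begin{itemize}
\item The LP value exceeded $k$. The LP optimum over a subset of columns is at least the old optimum, so it still exceeds $k$.
\item The rule $z^*_{\mathrm{RMP}}>k(1-c^*)$ applied. Here I invoke Lemma~1 of Appendix~A.3: under $\mathcal{F}'$, $z^*_{\mathrm{RMP}}+kc^*$ is a valid lower bound on the full node LP. Since the new full node LP is a restriction of the old one, its value is at least the old full LP value, which exceeds $k$.
\end{itemize}
In both sub-cases, every integer solution in the new $\Omega_\nu$ has value greater than $k$, so none lies on layer $k$.

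\textbf{Integer solution rejected by the oracle.} If $\nu$ produced an integer solution that the oracle rejected and that was then cut off, that solution violates a cut that remains in the pool, and the node's other solutions are handled by continuing from the recorded state rather than by fathoming.

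Finally, any solution that could become optimal at layer $k$ under the enlarged pool must lie in $\Omega_\nu(\mathcal{F}'\cup\{\bar S\})\subseteq\Omega_\nu(\mathcal{F}')$. By the case analysis, that set contains no layer-$k$ feasible solution, which gives the contradiction. Induction on the number of cuts added extends the argument to an arbitrary sequence of cut insertions.

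The main obstacle is the bound sub-case that uses the reduced-cost rule. The fathoming test was computed from duals and a pricing value $c^*$ under the old pool, and one must argue that the certificate remains meaningful afterward. I will handle this by not reusing the old duals at all. Instead I pass to the full LP value over the old admissible set, which the lemma bounds from below. I then use only that the new full LP is a minimization over a subset of columns with the same constraints. Care is needed to state that the branching-induced column restrictions are intersected identically in both LPs.
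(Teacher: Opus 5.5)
Your proposal is correct and follows essentially the same route as the paper. Inserting a certified cut only removes admissible patterns, so every node's feasible region shrinks as in \eqref{eq:cut-pool-monotonicity}, and a case split on the reason for fathoming (infeasibility, bound, or exhausted/rejected integer solutions) shows that none of these reasons can be undone. Your extra care simply makes explicit what the paper's terser proof leaves implicit: you check that the pair-branching restrictions do not depend on the cut pool, and you handle the reduced-cost rule by passing to the old full node LP value rather than reusing stale duals. That value exceeds $k$ by the contrapositive of Lemma~\ref{lem:cg-lower-bound}, which is stated under the hypothesis $z^*_{MP,f}\le k$.
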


\begin{proposition}[Exactness of complete branch-and-price search]
\label{prop:complete-bp-fixed-cut}
For a fixed cut pool $\mathcal{F}'$, a complete branch-and-price search
solves the corresponding RBMP exactly.
\end{proposition}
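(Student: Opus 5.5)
The statement concerns Proposition~\ref{prop:complete-bp-fixed-cut}. The argument is the standard branch-and-price correctness argument, specialized to the admissible pattern set
\[
\mathcal{P}(\mathcal{F}')=\{p\in\mathcal{P}: S\nsubseteq S(p)\ \forall S\in\mathcal{F}'\}.
\]
The proof has three parts: an equivalent form of the RBMP, validity of each node bound, and finiteness and exhaustiveness of the tree.

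\textbf{Step 1: equivalent form of the RBMP.} Constraint~\eqref{eq:rbmp-master-cuts} forces $\lambda_p=0$ for every non-admissible $p$. Hence the RBMP is equivalent to the set-partitioning problem \eqref{eq:rbmp-master-obj}--\eqref{eq:rbmp-master-cover} over $\mathcal{P}(\mathcal{F}')$. This set is finite, since $\mathcal{P}$ is a family of subsets of a finite set. So the RBMP has finitely many feasible solutions, and an optimum exists whenever it is feasible.

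\textbf{Step 2: node LP bounds are valid.} At each node, column generation with the exact pricing problem \eqref{eq:pricing-reduced-cost}--\eqref{eq:pricing-binary}, together with the branching restrictions, solves the node LP relaxation over all admissible and branch-consistent patterns. The feasible set of the pricing problem is exactly the set of incidence vectors of patterns in $\mathcal{P}(\mathcal{F}')$ that are consistent with the branching decisions.
\begin{itemize}
\item If the minimum reduced cost is nonnegative, the restricted LP dual is feasible for the full node LP. By LP duality, $z^*_{\mathrm{RMP}}$ then equals the full node LP value.
\item The adaptive switching scheme never certifies termination from an interrupted fast run, so the certificate is always issued by an exact procedure.
\item The label pruning uses $\Pi$, an upper bound on the attainable dual value. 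Pruning therefore discards only labels whose reduced cost cannot be negative.
\item Column generation terminates finitely because there are finitely many columns.
\end{itemize}
The node LP value is thus a valid lower bound on every integer solution in the node's subtree. The same holds for the Lagrangian-type bound $z^*_{\mathrm{RMP}}+kc^*$, which follows from the fact that any solution of value at most $k$ has $\sum\lambda_p\le k$; I invoke Lemma~1 of Appendix~A.3 for this. Fathoming by bound, or against layer $k$, therefore discards no solution better than the current incumbent, nor any solution of value $k$.

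\textbf{Step 3: branching is complete and finite.} Ryan--Foster pair branching partitions the integer solutions of a node. In any integer solution, each pair $i,j$ is either together in one pattern or separated. Both children are again problems of the same form, with restricted pattern sets, so Step~2 applies to them. If the LP solution is fractional, some pair has $\rho_{ij}$ fractional. This is the standard Ryan--Foster lemma for set partitioning with 0/1 columns: if all $\rho_{ij}\in\{0,1\}$, then the columns with positive weight have pairwise disjoint supports or coincide, so the solution is integral.

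Each branch fixes one of finitely many pairs, so the tree depth is at most $\binom{n}{2}$ and the tree is finite. Leaves are either infeasible, fathomed validly by Step~2, or integral and feasible for the RBMP. Consequently, the best integer solution found is optimal, and a layer declared empty truly contains no RBMP solution of value $k$.

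\textbf{Main obstacle.} I expect the delicate step to be Step~2. It requires checking that the label-setting pricing, including the cut pool, branching merges, conflict constraints and bound-based pruning, returns a true minimum-reduced-cost column whenever one is negative. I would handle this by showing that each pruning rule removes only labels dominated or bounded away from negative reduced cost, and that group merging under the ``together'' branch preserves the admissible set exactly.
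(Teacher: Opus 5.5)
Your proposal is correct and follows essentially the same route as the paper's proof in Appendix~A.3. The paper also relies on exact column generation at each node, certified by LP duality once no negative reduced-cost column remains, on the validity of the layer-fathoming bound $z^*_{\mathrm{RMP}}+kc^*$ via Lemma~\ref{lem:cg-lower-bound}, and on together/separate pair branching that partitions the integer solutions of each node. You additionally make explicit two facts the paper leaves implicit: the finiteness of column generation and of the search tree, and the Ryan--Foster fact that a fractional node LP always has a pair with fractional $\rho_{ij}$.
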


\begin{proposition}[Exactness of continued branch-and-price search]
\label{prop:continued-bp-after-cut}
For a fixed objective layer, after adding valid Benders cuts, continuing
the branch-and-price search from the current state over the updated cut
pool preserves exactness.
\end{proposition}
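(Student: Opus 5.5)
The argument will be a union of disjoint sub-searches, using Propositions~\ref{prop:searched-branches-remain-fathomed} and~\ref{prop:complete-bp-fixed-cut}. Fix an objective layer $k$. Let $\mathcal{F}'$ be the cut pool when an integer candidate is found and the oracle is called. Let $\mathcal{F}''=\mathcal{F}'\cup\mathcal{S}_{cut}$ be the updated pool, where every set in $\mathcal{S}_{cut}$ is certified infeasible and hence valid by Proposition~\ref{prop:valid-benders-cuts}. We need to show that the continued search over $\mathcal{F}''$ either returns an integer RBMP solution of value $k$ feasible for the $\mathcal{F}''$-restricted master, or correctly certifies that none exists.

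At the interruption point, the branch-and-price tree splits into two parts. The first is the set $\mathcal{N}_f$ of nodes already fathomed, whether by infeasibility, by integrality, or by the layer test $z_{\mathrm{RMP}}^*>k(1-c^*)$ (Lemma~1 of Appendix~A.3). The second is the set $\mathcal{N}_o$ of open nodes, which includes the node that produced the candidate. Because pair branching on $(i,j)$ partitions the integer solutions of a node, the integer feasible regions $\Omega_N(\mathcal{F}')$, $N\in\mathcal{N}_f\cup\mathcal{N}_o$, are pairwise disjoint and their union is the set of integer solutions of the RBMP at layer $k$.

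\textbf{Step 1.} By the monotonicity \eqref{eq:cut-pool-monotonicity}, $\Omega_N(\mathcal{F}'')\subseteq\Omega_N(\mathcal{F}')$ for every $N$. A fathomed node contains no improving solution of value $k$ under $\mathcal{F}'$, so it contains none under $\mathcal{F}''$; this is exactly Proposition~\ref{prop:searched-branches-remain-fathomed}.

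\textbf{Step 2.} Each open node $N\in\mathcal{N}_o$ is processed afresh. Its pricing problem is reset to respect $\mathcal{F}''$ together with the branching restrictions at $N$. Columns that violate the new cuts are deleted, and the retained columns stay admissible. Pricing is exact, including under the adaptive switching of Section~\ref{subsubsec:adaptive-pricing}. So the subtree search at $N$ is a complete branch-and-price search over the fixed pool $\mathcal{F}''$ restricted to $\Omega_N$, and Proposition~\ref{prop:complete-bp-fixed-cut} shows it solves that restriction exactly. Combining the two steps, the continued search finds a value-$k$ solution of the $\mathcal{F}''$-master exactly when one exists.

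\textbf{Step 3.} Validity carries through iteration. The cut pool can grow again, but each enlargement is finite, and the argument repeats with $\mathcal{F}''$ in place of $\mathcal{F}'$. Because $\mathcal{P}$ is finite, only finitely many cuts can be added, so the process terminates.

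The main obstacle is Step 2. The continued search reuses stale information at the open nodes: the LP bounds, the columns, and any node whose LP was already solved under $\mathcal{F}'$. The key point is that bounds computed under $\mathcal{F}'$ remain valid lower bounds under $\mathcal{F}''$, because the feasible region only shrinks. Using them can therefore only delay fathoming, never prune wrongly. Any final fathoming or integrality certification is made only after column generation has been re-run to exact optimality with the updated pricing constraints. I would state this re-solve requirement explicitly as part of what ``continuing the search'' means.
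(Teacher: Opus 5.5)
Your proposal is correct and follows the paper's own argument. The search is split into already-fathomed branches, which stay fathomed by the monotonicity $\Omega_N(\mathcal{F}'')\subseteq\Omega_N(\mathcal{F}')$ (Proposition~\ref{prop:searched-branches-remain-fathomed}), and not-yet-searched branches, which are solved exactly because pricing is re-run under the enlarged pool and Proposition~\ref{prop:complete-bp-fixed-cut} applies. Your additions sharpen the same proof: you partition the leaves of the actual current tree rather than comparing with a hypothetical from-scratch tree whose branching need not match, you note that stale bounds stay valid, and you induct explicitly over repeated cut insertions. The termination claim in Step~3 goes beyond what the proposition asks.
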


\begin{proposition}[Optimality certificate at an objective layer]
\label{prop:certified-rbmp-optimality}
Suppose that all objective layers smaller than \(k\) have been excluded by exact branch-and-price search, and an integer RBMP solution with objective value \(k\) is certified geometrically feasible by the single-bin oracle. Then this solution is optimal for the complete Benders formulation.
\end{proposition}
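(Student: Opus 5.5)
The argument is a sandwich between a lower bound and a feasible solution of the complete formulation. Let \(z^*\) be the optimal value of the complete Benders formulation \eqref{eq:benders-obj}--\eqref{eq:benders-y-binary}, written equivalently in pattern form as the RBMP \eqref{eq:rbmp-master-obj}--\eqref{eq:rbmp-master-binary} with \(\mathcal{F}'\) replaced by the full family \(\mathcal{F}\). I will show \(z^*\ge k\) and that the given solution is feasible for the complete formulation with value \(k\). Optimality then follows.

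\textbf{Step 1: the lower bound \(z^*\ge k\).} The current cut pool satisfies \(\mathcal{F}'\subseteq\mathcal{F}\), since every cut in it is certified by the oracle and is valid by Proposition~\ref{prop:valid-benders-cuts}. Therefore every feasible solution of the complete formulation, viewed as a collection of patterns, is feasible for the RBMP under any cut pool encountered during the run. Now suppose a complete solution had value \(k'<k\). It would be an integer RBMP solution at layer \(k'\) under the cut pool in force when layer \(k'\) was excluded.

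Layer \(k'\) was excluded by exact branch-and-price search. By Proposition~\ref{prop:complete-bp-fixed-cut} this certifies that no such RBMP solution exists, and by Proposition~\ref{prop:continued-bp-after-cut} the certificate still holds when cuts were added mid-search. A solution at layer \(k'\) would contradict this certificate. The one subtlety is that the cut pool changes over time. It only grows, so \(\Omega(\mathcal{F}'\cup\{\bar S\})\subseteq\Omega(\mathcal{F}')\) by \eqref{eq:cut-pool-monotonicity}, and the complete formulation's feasible set lies inside every such region. Exclusion under any intermediate pool therefore implies exclusion for the complete formulation, which gives \(z^*\ge k\).

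\textbf{Step 2: feasibility of the returned solution.} The returned integer RBMP solution selects patterns \(p\) covering every item exactly once, and each selected set \(S(p)\) has \(\Phi(S(p))=1\). I take the patterns as bins, so there are \(k\) of them. Setting \(y_b=1\) and \(x_{ib}=a_{ip}\) satisfies \eqref{eq:benders-assign}.

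The area constraint \eqref{eq:benders-area} holds for two reasons: patterns lie in \(\mathcal{P}\) by construction, and a geometric packing implies the area bound anyway.

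For \eqref{eq:benders-feasibility-cuts}, take any \(S\in\mathcal{F}\). If \(S\subseteq S(p)\), then the packing of \(S(p)\) restricted to \(S\) would be a packing of \(S\), because containment and non-overlap are inherited by subsets. That contradicts \(\Phi(S)=0\). Hence no selected pattern contains an infeasible subset, all cuts hold, and the objective value is \(k\).

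\textbf{Main obstacle.} Combining the two steps gives \(z^*=k\), so the solution is optimal. The step I expect to need the most care is Step 1: lower layers were excluded under smaller, earlier cut pools, possibly by continued rather than restarted searches. I would handle this by relying explicitly on monotonicity and on Propositions~\ref{prop:searched-branches-remain-fathomed} and \ref{prop:continued-bp-after-cut}, rather than on the pool in force at the end. I would also note that the \(\varepsilon\)-rounded starting layer \(LB_0\) is a valid bound, since layers below \(LB_0\) are excluded by the initial RBMP's LP value.
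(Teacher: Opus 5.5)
Your proposal is correct and follows essentially the same sandwich argument as the paper. Oracle certification makes the returned solution feasible, which gives $z^*\le k$. Validity of every generated cut ($\mathcal{F}'\subseteq\mathcal{F}$), together with exactness of the complete and continued branch-and-price search, rules out any complete solution with fewer than $k$ bins, which gives $z^*\ge k$. Your explicit subset-heredity argument, showing that certified patterns satisfy every cut in $\mathcal{F}$, and your remark on layers below $LB_0$ are details the paper leaves implicit.
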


\begin{theorem}[Exactness of the proposed algorithm]
\label{thm:algorithm-exactness}
If the proposed algorithm terminates with a solution, then the returned
solution is an optimal solution of the original two-dimensional irregular
bin packing problem.
\end{theorem}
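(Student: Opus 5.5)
The plan is to combine Propositions~\ref{prop:valid-benders-cuts}--\ref{prop:certified-rbmp-optimality} with an induction over the iterations of Algorithm~\ref{alg:benders-framework}. The induction tracks two invariants. The first is that every set in the current cut pool $\mathcal{F}'$ lies in $\mathcal{F}$. The second is that the current objective layer $k$ is a valid lower bound on the optimal value of the complete Benders formulation \eqref{eq:benders-obj}--\eqref{eq:benders-y-binary}. Once both invariants are established, termination in Line~17 yields the claim directly through Proposition~\ref{prop:certified-rbmp-optimality}.

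\textbf{Invariant on the cut pool.} Initially $\mathcal{F}'=\mathcal{F}^{\mathrm{pair}}$, and each of its pairs is certified infeasible by the exact oracle, so $\mathcal{F}^{\mathrm{pair}}\subseteq\mathcal{F}$. Every later addition is a set $\bar S_b$ returned either as the whole set $S_b$ or by deletion-based refinement. In both cases the final set is certified with $\Phi(\bar S_b)=0$, so $\bar S_b\in\mathcal{F}$. By Proposition~\ref{prop:valid-benders-cuts}, every cut in the pool is therefore valid for the complete formulation. It follows that the feasible set of each RBMP contains the feasible set of the complete formulation, once the complete formulation is written in pattern form with $\lambda_p$ for patterns whose item sets are packable.

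\textbf{Invariant on the objective layer.} For the base case, $LB_0=\lceil z^0_{\mathrm{RBMP}}-\varepsilon\rceil$ is at most the optimum, because the RBMP is a relaxation and the optimum is an integer. For the inductive step, $k$ is increased only when branch-and-price certifies that no integer RBMP solution with value $k$ exists. Here I rely on Proposition~\ref{prop:complete-bp-fixed-cut} when the search ran with a fixed pool, and on Propositions~\ref{prop:searched-branches-remain-fathomed} and~\ref{prop:continued-bp-after-cut} when cuts were inserted mid-search and the search was continued rather than restarted. The layer-specific fathoming rule $z^*_{\mathrm{RMP}}>k(1-c^*)$ is valid by the cited Lemma~1. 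Since the RBMP relaxes the complete formulation, no complete solution has value $k$ either. Hence all layers up to $k$ are excluded, and $k+1$ remains a lower bound.

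\textbf{Termination and the main obstacle.} At termination, the integer RBMP solution has value $k$ and every bin passes the oracle. All layers below $k$ have been excluded, so Proposition~\ref{prop:certified-rbmp-optimality} gives optimality for the complete Benders formulation. To transfer optimality to the original problem, I show that the two are equivalent. Any packing gives $x,y$ satisfying \eqref{eq:benders-assign}--\eqref{eq:benders-area}, and it satisfies \eqref{eq:benders-feasibility-cuts} because every subset of a packable set is packable. Conversely, a solution of the complete formulation has $\Phi(S_b)=1$ for every open bin, since otherwise $S_b\in\mathcal{F}$ would violate its own cut. Placements can then be taken from the oracle.

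I expect the main obstacle to be the second invariant across mid-search cut insertions. There, "exclusion of layer $k$" must be argued for the final cut pool, not for a pool that changed during the search. I would handle this by monotonicity \eqref{eq:cut-pool-monotonicity}: a node fathomed under an earlier, weaker pool remains fathomed under the stronger final pool. I would also argue that discarding inadmissible columns does not invalidate the pricing-based bounds used at unexplored nodes, since those nodes are re-priced under the updated pool.
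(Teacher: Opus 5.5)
Your proposal is correct and follows essentially the same route as the paper's proof. Proposition~\ref{prop:valid-benders-cuts} gives validity of the cut pool, Propositions~\ref{prop:searched-branches-remain-fathomed}--\ref{prop:continued-bp-after-cut} give exact exclusion of each lower objective layer despite mid-search cut insertion, Proposition~\ref{prop:certified-rbmp-optimality} certifies optimality at termination, and the equivalence of the complete Benders formulation with the original problem completes the argument; your two-invariant induction (including the check that $LB_0$ is a valid starting bound) and your hereditary-packability argument for that equivalence make explicit steps the paper states only briefly.
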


The proof follows from the validity of all generated Benders cuts, the
fact that previously searched branches remain fathomed after cut
insertion, the exactness of complete branch-and-price search for a fixed
cut pool, the exactness of continued branch-and-price search after cut
insertion, and the optimality certificate provided by the single-bin
oracle.

\section{Computational Experiments}
\label{sec:experiments}
\subsection{Experimental Setup}
\label{subsec:experimental-setup}

The experiments are conducted on a benchmark set of two-dimensional
irregular bin packing instances. The benchmark consists of 18 classes,
denoted by TA--TR, and each class contains 30 instances. In total, the
test set contains 540 instances. The instances are obtained from \url{https://github.com/ESICUP/datasets}.
For each instance, the bin dimensions, item polygons, and no-fit-polygon
data are provided as input. 

All experiments are performed on a computer equipped with Core i7-11700@2.50GHz CPU and
64 GB RAM, running Windows 11 operating system. The code is implemented in C++. Gurobi 12 is used for the LP and MIP components
of the algorithm. All runs are executed in single-thread mode. To avoid
interference among instances, the benchmark cases are solved strictly
sequentially, with a one-second waiting interval between two consecutive
runs.
All computational experiments were conducted using Gurobi under a valid academic license that was available during the experimental period.

Each instance is given a time limit of 3600 seconds. The reported running
time is the total wall-clock time of the proposed method, including the preprocessing, 
restricted master problem, pricing, branch-and-price search, Benders cut
generation, and single-bin feasibility checks. An instance is marked as
solved only if the algorithm returns a solution certified optimal by the
Benders framework. If the time limit is reached before such a certificate
is obtained, the instance is reported as unsolved within the time limit.
The detailed experimental configuration is summarized in
Table~\ref{tab:solver-config}.

\begin{table}[H]
\centering
\caption{Algorithmic and solver configuration}
\label{tab:solver-config}
\begin{tabular}{ll}
\hline
Parameter & Value \\
\hline
Runtime & 3600s\\
Number of threads & 1 \\
Branching target $\tau$ & 0.55 \\
Gurobi LP method & Dual simplex \\
Gurobi Crossover & Disabled \\
Gurobi NumericFocus & 2 \\
Gurobi FeasibilityTol & $10^{-8}$ \\
Gurobi OptimalityTol & $10^{-9}$ \\
\hline
\end{tabular}
\end{table}

\subsection{Results on Benchmark Instances}
\label{subsec:benchmark-results}

This section reports the computational results on the benchmark
instances. We first present the performance of the proposed method on all
18 benchmark classes. We then compare the proposed method with two
baseline approaches on the classes where the proposed method solves at
least one instance.

\subsubsection{Overall results of the proposed method.}
Table~\ref{tab:proposed-all-results} summarizes the performance of the
proposed method over all 18 benchmark classes. Column ``\#Opt'' gives
the number of instances solved to obtain the optimal solution within the time
limit. Column ``\#Unsolved'' includes all instances not solved to proven
optimality within the time limit.
Column ``Avg. obj.'' is the average certified objective value over solved
instances. Column ``Avg. time'' is the average wall-clock running time
over solved instances only.

\begin{table}[t]
\centering
\caption{Results of the proposed method on all benchmark classes}
\label{tab:proposed-all-results}
\begin{tabular}{lrrrrr}
\hline
Class & \#Inst & \#Opt & \#Unsolved & Avg. obj. & Avg. time (s)\\
\hline
TA & 30 & 30 & 0  & 3.00  & 222.73 \\
TB & 30 & 30 & 0  & 10.00 & 0.32 \\
TC & 30 & 30 & 0  & 6.00  & 5.75 \\
TD & 30 & 0  & 30 & --    & -- \\
TE & 30 & 0  & 30 & --    & -- \\
TF & 30 & 0  & 30 & --    & -- \\
TG & 30 & 30 & 0  & 13.53 & 2.91 \\
TH & 30 & 30 & 0  & 12.00 & 0.43 \\
TI & 30 & 0  & 30 & --    & -- \\
TJ & 30 & 0  & 30 & --    & -- \\
TK & 30 & 30 & 0  & 6.00  & 169.33 \\
TL & 30 & 30 & 0  & 3.00  & 32.66 \\
TM & 30 & 30 & 0  & 5.00  & 230.96 \\
TN & 30 & 0  & 30 & --    & -- \\
TO & 30 & 30 & 0  & 7.00  & 0.59 \\
TP & 30 & 1  & 29 & 8.00  & 1769.29 \\
TQ & 30 & 30 & 0  & 15.00 & 49.45 \\
TR & 30 & 17 & 13 & 9.00  & 962.34 \\
\hline
Total & 540 & 318 & 222 & -- & -- \\
\hline
\end{tabular}
\end{table}

The proposed method solves 318 out of 540 instances to obtain the optimal solution. It solves all 30 instances in ten classes: TA, TB, TC, TG,
TH, TK, TL, TM, TO, and TQ. Among these classes, TB, TH, and TO are
solved very quickly, with average running times below one second. The
classes TA, TK, TM, and TR are more difficult, and the average running
time over solved instances is larger. No instance is solved
within the time limit for TD, TE, TF, TI, TJ, and TN.

Class TP requires additional care in the implementation. In the reported
TP experiments, weak dominance in the pricing routine is disabled to
avoid memory exhaustion. This is an engineering stability treatment for
the pricing implementation and does not change the mathematical model,
the Benders cuts, or the exactness arguments in
Section~\ref{subsec:exactness-implementation}. 

\subsubsection{Comparison with MIP and CBD.}
The three methods considered in the comparison are as follows. The first
method is the proposed exact combinatorial Benders decomposition
algorithm. The second method, denoted by MIP, is a direct mixed-integer
programming formulation solved by Gurobi. The third method, denoted by
CBD, is a baseline combinatorial Benders decomposition method adapted
from the exact Benders framework originally developed for rectangular bin
packing problems~\cite{cote2021combinatorial}. Since several components of that method are
specific to rectangular items, we remove the rectangle-specific
precomputed lower bounds and lifted cuts, and keep only the transferable
Benders structure.

We next compare the proposed method with MIP and CBD on the 12 classes
where the proposed method solves at least one instance. Table
\ref{tab:comparison-success-classes} reports the comparison. For each
method, ``\#Opt'' is the number of instances solved to obtain the optimal solution.
``Avg. LB'' is the average lower bound, which is computed over all available runs in the class. For all
methods, ``Avg. time'' is the average running time over solved instances
only. For MIP, we additionally report the average upper bound and average
optimality gap over instances where both lower and upper bounds are
available. The gap is computed as
\[
    \mathrm{gap}
    =
    \frac{UB-LB}{|UB|}\times 100\%.
\]
A dash indicates that the corresponding statistic is not available.
To provide a more intuitive comparison of the three methods, Figure~\ref{fig:solved_instances} reports the number of instances solved to obtain the optimal solution in each benchmark class.

\begin{table}[t]
\centering
\scriptsize
\caption{Comparison on classes where the proposed method solves at least one instance}
\label{tab:comparison-success-classes}
\begin{tabular}{lrrr|rrr|rrrrr}
\hline
Class
& \multicolumn{3}{c|}{Proposed}
& \multicolumn{3}{c|}{CBD}
& \multicolumn{5}{c}{MIP} \\
\cline{2-4}\cline{5-7}\cline{8-12}
& \#Opt & Avg. LB & Avg. time (s)
& \#Opt & Avg. LB & Avg. time (s)
& \#Opt & Avg. LB & Avg. UB & Avg. gap & Avg. time (s) \\
\hline
TA & 30 & 3.00  & 222.73 & 3  & 3.00  & 481.50  & 0  & 3.00  & 4.17  & 27.50 & -- \\
TB & 30 & 10.00 & 0.32   & 30 & 10.00    & 0.64    & 30 & 10.00 & 10.00 & 0.00  & 8.69 \\
TC & 30 & 6.00  & 5.75   & 2  & 6.00  & 1358.52 & 0  & 6.00  & 8.10  & 25.67 & -- \\
TG & 30 & 13.53 & 2.91   & 17 & 13.20   & 549.68  & 25 & 13.40 & 13.57 & 1.21  & 95.45 \\
TH & 30 & 12.00 & 0.43   & 30 & 12.00 & 2.08    & 30 & 12.00 & 12.00 & 0.00  & 30.29 \\
TK & 30 & 6.00  & 169.33 & 0  & 6.00  & --      & 0  & 5.97  & 8.31  & 27.97 & -- \\
TL & 30 & 3.00  & 32.66  & 28 & 3.00  & 684.03  & 3  & 3.00  & 4.10  & 25.50 & 416.49 \\
TM & 30 & 5.00  & 230.96 & 0  & 5.00  & --      & 0  & 4.93  & 7.23  & 30.98 & -- \\
TO & 30 & 7.00  & 0.59   & 30 & 7.00  & 0.32    & 30 & 7.00  & 7.00  & 0.00  & 9.55 \\
TP & 1  & 8.00  & 1769.29& 0  & 8.00  & --      & 0  & 7.60  & 13.93 & 44.19 & -- \\
TQ & 30 & 15.00 & 49.45  & 0  & 15.00 & --      & 0  & 14.13 & 25.93 & 45.33 & -- \\
TR & 17 & 9.00  & 962.34 & 0  & 9.00    & --      & 0  & 7.33  & 17.00 & 56.72 & -- \\
\hline
\end{tabular}
\end{table}
\begin{figure}[htbp]
    \centering
    \includegraphics[width=\linewidth]{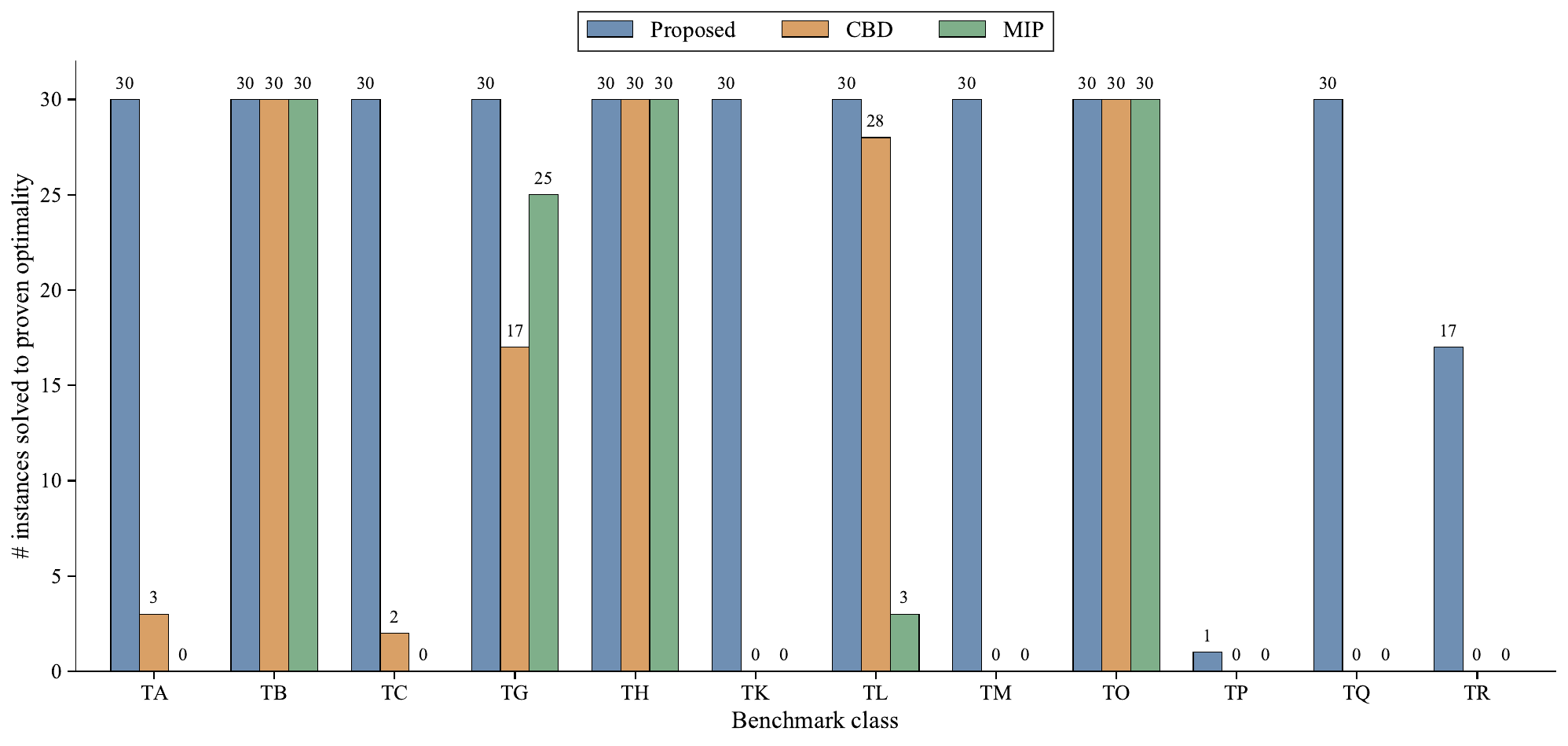}
    \caption{Number of instances solved to obtain the optimal solution within 3600 s by the proposed method and the baseline approaches across benchmark classes.}
    \label{fig:solved_instances}
\end{figure}

The comparison shows that the proposed method outperforms the two
baselines on most classes where it is effective. CBD performs well on
TB, TH, and TO, but its performance deteriorates on classes such as TA,
TC, TK, TM, TP, TQ, and TR. The proposed method solves more instances than the baseline CBD on TA, TC, TK, TM, TQ, and TR.
In particular, solving the master problem is a major computational challenge within the Benders framework, and the results highlight the benefit of introducing a tailored branch-and-price procedure to improve the efficiency of the master search.
The direct MIP approach also solves the relatively easy classes TB, TH,
and TO, and solves many TG instances. However, it struggles to close the
optimality gap on the more geometrically constrained classes. This is
reflected by the large MIP gaps on TA, TC, TK, TL, TM, TP, TQ, and TR. On
these classes, Gurobi often finds feasible upper bounds but does not
prove optimality within the time limit.
This comparison further highlights the value of the Benders decomposition strategy, which separates the original problem into a combinatorial master problem and an exact single-bin geometric feasibility subproblem, rather than requiring the solver to handle the full combinatorial and geometric complexity simultaneously.

For TP, the direct MIP solver produced incumbent solutions in some runs,
but the reported lower and upper bounds did not match at termination.
These runs are therefore not counted as solved. We report their lower
bounds, upper bounds, and gaps only as bound information. More generally,
a MIP run is counted as solved only when a valid optimality certificate
is obtained. Runs that return an incumbent solution without a matching
lower bound are not counted as solved. This status convention avoids
over-reporting the performance of the direct MIP approach on instances
where a feasible packing is found but optimality is not certified.

The unsolved instances also reveal two main limitations of the current framework. First, the exact single-bin geometric feasibility check remains a major computational bottleneck. In our experiments, oracle calls become particularly expensive when a candidate bin contains more than about 15 polygons. 
While geometric feasibility can sometimes be established quickly by finding a valid placement, infeasibility must be certified by the exact oracle and therefore cannot be safely concluded from a heuristic procedure. 
As a result, difficult infeasible subsets may require substantial computational effort before a valid Benders cut can be generated. 
Second, as the number of items increases, the master problems  become increasingly challenging. 
The RBMP and its pricing problems effectively form  large puzzle-like one-dimensional bin-packing problems with dynamically accumulated conflict and infeasible-subset constraints, leading to a rapidly expanding combinatorial search space. 
These two sources of difficulty—exact geometric infeasibility certification and large-scale combinatorial master search—constitute the main bottlenecks of the current exact framework.
\section{Conclusions}
\label{sec:conclusion}
This paper developed an exact combinatorial Benders decomposition framework for the two-dimensional irregular bin packing problem with convex polygons, coupling a pattern-based branch-and-price master problem with an exact single-bin geometric feasibility oracle. The framework progressively incorporates geometric infeasibility through valid Benders cuts, while objective-layered search, adaptive exact pricing, and geometric feedback improve the efficiency of the dynamically strengthened master search without compromising exactness. Computational experiments on 540 benchmark instances from 18 classes show that the proposed method obtains the optimal solution for 318 instances within a 3600-second time limit and outperforms a direct mixed-integer programming formulation and a baseline combinatorial Benders approach on several challenging classes. 
The computational results demonstrate the effectiveness of separating the combinatorial master search from geometric feasibility certification and coordinating the two components through Benders feedback.
Nevertheless, a substantial number of instances remain unsolved within the time limit, indicating that further improvements in the master search and geometric feasibility evaluation are still needed for the most challenging instances.

Several extensions also deserve further investigation. 
The current study considers convex polygons with fixed orientations. Extending the framework to more general irregular packing settings, including non-convex polygons and multiple allowable rotations, would broaden its applicability but would require more sophisticated geometric representations and feasibility-oracle techniques. 
In addition, several decisions in the current framework, such as Benders-cut selection and branching-candidate selection, are governed by predefined rules.
Learning-based methods could potentially be used to guide these decisions based on the evolving search state. 
An important research direction is therefore to investigate how such data-driven guidance can be integrated into the exact framework while preserving the validity of bounds and cuts and maintaining optimality guarantees.
\bibliography{reference}

\appendix
\section{Proofs of Exactness}
\label{app:exactness-proof}

\subsection{Proof of Proposition~\ref{prop:valid-benders-cuts}}

Let $S\subseteq I$ be an item subset such that the single-bin feasibility
oracle proves $\Phi(S)=0$. By definition of $\Phi$, the items in $S$
cannot be packed together into one bin. Therefore, no feasible solution
of the original problem can contain a selected bin whose item set includes
$S$.

In the pattern-based formulation, a pattern $p$ represents one bin, and
$S(p)$ denotes the item set contained in this pattern. If
$S\subseteq S(p)$, then selecting pattern $p$ would assign all items in
$S$ to the same bin, which is impossible because $\Phi(S)=0$. Hence every
pattern containing $S$ must be forbidden. This is exactly enforced by
\begin{equation}
    \sum_{p\in\mathcal{P}:S\subseteq S(p)} \lambda_p = 0.
\end{equation}
Thus, the cut is valid for the complete Benders formulation. The argument
does not require $S$ to be a minimal infeasible subset; minimality affects
only the strength of the cut, not its validity.
\qed

\subsection{Proof of Proposition~\ref{prop:searched-branches-remain-fathomed}}

Consider a fixed objective layer $k$ and a branch-and-price search tree
defined under the current Benders cut pool $\mathcal{F}'$. Let
$\bar{S}\in\mathcal{F}$ be a newly detected infeasible item set. Adding
the corresponding Benders cut removes all patterns containing
$\bar{S}$. Therefore, if $\Omega(\mathcal{F}')$ denotes the feasible
region of the RBMP under cut pool $\mathcal{F}'$, then
\begin{equation}
    \Omega(\mathcal{F}'\cup\{\bar{S}\})
    \subseteq
    \Omega(\mathcal{F}').
\label{eq:proof-cut-monotonicity}
\end{equation}
The feasible region can only shrink after cut insertion.

Now consider a branch that has already been searched and fathomed before
the new cut is inserted. Such a branch is fathomed only if one of the
following conditions holds: the branch is infeasible, its lower bound is
larger than the current objective layer or incumbent threshold, or all
integer solutions in the branch have already been explored and rejected.
After adding the new Benders cut, the set of feasible columns and
therefore the set of feasible master solutions in this branch is a subset
of the previous one. Hence an infeasible branch cannot become feasible, a
branch whose lower bound was too large cannot acquire a better feasible
solution, and an already exhausted branch cannot contain a new integer
solution that was absent before.

Therefore, adding valid Benders feasibility cuts cannot make a previously
searched and fathomed branch become capable of producing an optimal
solution at the same objective layer.
\qed

\subsection{Proof of Proposition~\ref{prop:complete-bp-fixed-cut}}

Fix a Benders cut pool $\mathcal{F}'$. Under this fixed cut pool, the RBMP
is a set-partitioning master problem over all admissible patterns, where
a pattern is admissible if it satisfies the capacity constraint and does
not contain any item subset in $\mathcal{F}'$.

We first establish the validity of the column-generation bound used at a
branch-and-price node.

\begin{lemma}[Column-generation lower bound]
\label{lem:cg-lower-bound}
Let $k$ be the current objective layer and let $\Omega_f$ denote the set of all columns satisfying the capacity constraint, all current incompatibility constraints, and all branching restrictions imposed at the current branch-and-price node.
Let \(a_j\) denote the item-incidence vector of column \(j\), and let \(b\) denote the right-hand-side vector of the covering constraints.
 Consider the full LP master problem
\begin{equation}
z_{MP,f}^*
=
\min \left\{
\sum_{j\in\Omega_f} x_j :
\sum_{j\in\Omega_f} a_j x_j=b,\;
0\le x_j\le1,\ \forall j\in\Omega_f
\right\}.
\label{eq:MPf-proof}
\end{equation}
Let $R\subseteq\Omega_f$ be the current column set in the restricted
master problem:
\begin{equation}
z_{RMP}^*
=
\min \left\{
\sum_{j\in R}x_j:
\sum_{j\in R}a_jx_j=b,\;
0\le x_j\le1,\ \forall j\in R
\right\}.
\label{eq:RMP-proof}
\end{equation}
Let $\pi^*$ be an optimal dual solution associated with the covering
constraints of the RMP, and let $\beta_j^*\ge0$ be the optimal dual
multipliers associated with the upper bounds $x_j\le1$ for $j\in R$.
Define
\begin{equation}
c^*
=
\min_{j\in\Omega_f\setminus R}
(1-a_j^\top\pi^*).
\label{eq:cstar-def-proof}
\end{equation}
Assume that the pricing routine returns this minimum whenever $c^*<0$,
after excluding duplicate columns and infeasible columns through
additional exclusion constraints. If there exists an optimal solution
$x^{MP,f,*}$ of \eqref{eq:MPf-proof} satisfying
\begin{equation}
    \sum_{j\in\Omega_f}x_j^{MP,f,*}\le k,
\end{equation}
then, whenever $c^*<0$,
\begin{equation}
    z_{RMP}^*+kc^*
    \le
    z_{MP,f}^* .
\label{eq:cg-bound-proof}
\end{equation}

\end{lemma}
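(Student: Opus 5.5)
The plan is a standard Lagrangian (Farley-type) bound argument based on LP duality for the restricted master problem \eqref{eq:RMP-proof}, keeping track of the upper-bound constraints $x_j\le 1$. First, I will write the dual of \eqref{eq:RMP-proof}:
\[
\max\Bigl\{ b^\top\pi-\sum_{j\in R}\beta_j \;:\; a_j^\top\pi-\beta_j\le 1,\ \beta_j\ge 0,\ \forall j\in R\Bigr\}.
\]
By strong LP duality, with the RMP assumed feasible (e.g. through the initial or artificial columns), the optimal pair $(\pi^*,\beta^*)$ satisfies $z_{RMP}^*=b^\top\pi^*-\sum_{j\in R}\beta_j^*$. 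This identity is the only place where optimality of the RMP is used.

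Next, I will take the optimal solution $x^{MP,f,*}$ of \eqref{eq:MPf-proof} with $\sum_{j\in\Omega_f}x_j^{MP,f,*}\le k$, and abbreviate it as $x$. Since $\sum_j a_jx_j=b$, its objective value can be rewritten as
\[
z_{MP,f}^*=\sum_{j\in\Omega_f}x_j=b^\top\pi^*+\sum_{j\in R}(1-a_j^\top\pi^*)x_j+\sum_{j\in\Omega_f\setminus R}(1-a_j^\top\pi^*)x_j .
\]
I will then bound the two sums separately.

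For $j\in R$, dual feasibility gives $1-a_j^\top\pi^*\ge-\beta_j^*$. Combined with $0\le x_j\le1$ and $\beta_j^*\ge0$, this yields $(1-a_j^\top\pi^*)x_j\ge-\beta_j^*x_j\ge-\beta_j^*$. For $j\in\Omega_f\setminus R$, definition \eqref{eq:cstar-def-proof} gives $(1-a_j^\top\pi^*)x_j\ge c^*x_j$. Because $c^*<0$ and $\sum_{j\in\Omega_f\setminus R}x_j\le\sum_{j\in\Omega_f}x_j\le k$, summing gives $\sum_{j\notin R}c^*x_j\ge kc^*$. Adding everything up,
\[
z_{MP,f}^*\ge b^\top\pi^*-\sum_{j\in R}\beta_j^*+kc^*=z_{RMP}^*+kc^*,
\]
which is \eqref{eq:cg-bound-proof}.

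The main point requiring care is not the algebra but the legitimacy of $c^*$ as a bound over \emph{all} of $\Omega_f\setminus R$. The pricing routine excludes duplicate columns, i.e. columns already in $R$, which is harmless because those lie in $R$. It also excludes infeasible columns, which is harmless because they lie outside $\Omega_f$, given that $\Omega_f$ is defined through the same capacity, incompatibility and branching restrictions enforced in pricing. So the exact pricing minimum coincides with \eqref{eq:cstar-def-proof}. I will also note explicitly that the sign condition $c^*<0$ is what allows replacing $\sum_{j\notin R}x_j$ by its upper bound $k$; this is why the hypothesis on the layer-$k$ optimal solution is needed. The bounded-variable terms $-\beta_j^*$ must be retained so that they match the dual objective exactly.
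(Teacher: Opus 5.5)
Your proposal is correct and follows essentially the same route as the paper's proof. Both arguments use strong duality for the bounded RMP, dual feasibility on $R$ (with the $\beta_j^*$ terms absorbed via $x_j\le 1$), the definition of $c^*$ on $\Omega_f\setminus R$, and the sign of $c^*$ together with $\sum_{j}x_j^{MP,f,*}\le k$; your Lagrangian rewriting of the objective is just a repackaging of the paper's summed inequalities.
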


\begin{proof}
Since $(\pi^*,\beta^*)$ is dual optimal for
\eqref{eq:RMP-proof}, strong duality gives
\begin{equation}
z_{RMP}^*
=
(\pi^*)^\top b
-
\sum_{j\in R}\beta_j^* .
\label{eq:dual-rmp-proof}
\end{equation}
For each generated column $j\in R$, dual feasibility gives
\begin{equation}
    a_j^\top\pi^*-\beta_j^* \le 1.
\label{eq:dual-generated-proof}
\end{equation}
For each non-generated feasible column
$j\in\Omega_f\setminus R$, the definition of $c^*$ gives
\begin{equation}
    1-a_j^\top\pi^* \ge c^*,
\end{equation}
and therefore
\begin{equation}
    a_j^\top\pi^*+c^* \le 1.
\label{eq:dual-nongenerated-proof}
\end{equation}

Consider any feasible solution $x$ of the full master problem. For
columns in $R$, multiplying \eqref{eq:dual-generated-proof} by $x_j\ge0$
gives
\begin{equation}
    x_j \ge a_j^\top\pi^* x_j-\beta_j^*x_j.
\end{equation}
Since $0\le x_j\le1$ and $\beta_j^*\ge0$, we have
$-\beta_j^*x_j\ge -\beta_j^*$. Hence
\begin{equation}
    x_j \ge a_j^\top\pi^* x_j-\beta_j^*,
    \quad \forall j\in R.
\end{equation}
For columns in $\Omega_f\setminus R$, multiplying
\eqref{eq:dual-nongenerated-proof} by $x_j\ge0$ gives
\begin{equation}
    x_j \ge a_j^\top\pi^* x_j+c^*x_j.
\end{equation}

Summing the above inequalities over all columns yields
\begin{align}
\sum_{j\in\Omega_f}x_j
&\ge
\sum_{j\in R}
(a_j^\top\pi^*x_j-\beta_j^*)
+
\sum_{j\in\Omega_f\setminus R}
(a_j^\top\pi^*x_j+c^*x_j)
\nonumber\\
&=
(\pi^*)^\top
\sum_{j\in\Omega_f}a_jx_j
-
\sum_{j\in R}\beta_j^*
+
c^*
\sum_{j\in\Omega_f\setminus R}x_j
\nonumber\\
&=
(\pi^*)^\top b
-
\sum_{j\in R}\beta_j^*
+
c^*
\sum_{j\in\Omega_f\setminus R}x_j
\nonumber\\
&=
z_{RMP}^*
+
c^*
\sum_{j\in\Omega_f\setminus R}x_j .
\label{eq:bound-general-proof}
\end{align}

Let $x=x^{MP,f,*}$ be an optimal solution of the full master problem.
Since
\[
    z_{MP,f}^*
    =
    \sum_{j\in\Omega_f}x_j^{MP,f,*},
\]
substituting $x^{MP,f,*}$ into \eqref{eq:bound-general-proof} gives
\begin{equation}
z_{MP,f}^*
\ge
z_{RMP}^*
+
c^*
\sum_{j\in\Omega_f\setminus R}
x_j^{MP,f,*}.
\end{equation}
Because $c^*<0$ and
\[
\sum_{j\in\Omega_f\setminus R}
x_j^{MP,f,*}
\le
\sum_{j\in\Omega_f}
x_j^{MP,f,*}
\le k,
\]
we have
\[
c^*
\sum_{j\in\Omega_f\setminus R}
x_j^{MP,f,*}
\ge
kc^* .
\]
Therefore,
\[
z_{MP,f}^*
\ge
z_{RMP}^*+kc^*,
\]
which proves the main bound.

Finally, because the objective function of \eqref{eq:MPf-proof} is
$\sum_{j\in\Omega_f}x_j$, every optimal solution satisfies
\[
\sum_{j\in\Omega_f}x_j^{MP,f,*}=z_{MP,f}^*.
\]
In the objective-layered search, $k$ is the current target objective value.
If the current node can contain an integer solution at objective layer $k$, then the corresponding full LP relaxation necessarily satisfies
\[
z^{*}_{\mathrm{MP},f} \le k.
\]
Hence, condition~(A.6) holds with the known value $k$.
\end{proof}

We now prove the proposition. At each node of the branch-and-price tree,
column generation solves the LP relaxation of the node master problem.
If the exact pricing problem returns no negative reduced-cost column,
then $c^*\ge0$. In this case, all non-generated admissible columns have
nonnegative reduced cost with respect to the current dual solution.
Therefore, the current RMP solution is optimal for the full LP relaxation
over $\Omega_f$.

If $c^*<0$, the pricing problem returns a most negative reduced-cost
column among all non-generated admissible columns. This column is added
to the RMP and column generation continues. Lemma~\ref{lem:cg-lower-bound}
also provides a valid lower bound on the full LP value at the node. Thus,
node fathoming based on this bound cannot discard a branch that contains
an improving integer solution.

If the node LP solution is fractional and cannot be fathomed, the
algorithm branches on a pair of items. The two child nodes enforce,
respectively, that the selected pair must be packed together or must be
packed separately. These two cases cover all integer solutions of the
parent node. The together branch is represented by merging item groups,
and the separate branch is represented by adding an incompatibility
constraint. Hence the branching rule preserves the structure of the
pricing problem and does not remove any feasible integer solution.

Therefore, exact column generation at each node, valid node fathoming,
and exhaustive pair-based branching imply that a complete
branch-and-price search solves the RBMP exactly for the fixed cut pool
$\mathcal{F}'$.
\qed

\subsection{Proof of Proposition~\ref{prop:continued-bp-after-cut}}

Consider a fixed objective layer and suppose that a valid Benders cut is
inserted during the branch-and-price search. Let $\mathcal{T}$ be the
complete branch-and-price tree that would be searched from scratch under
the updated cut pool. This tree can be partitioned into two parts:
branches that have already been searched before the cut insertion, and
branches that have not yet been searched.

By Proposition~\ref{prop:searched-branches-remain-fathomed}, none of the
already searched and fathomed branches can become capable of producing an
optimal solution at the same objective layer after the new Benders cut is
added. Therefore, excluding these already searched branches from the
continued search cannot remove an optimal solution of the updated RBMP.

It remains to consider the unsearched branches. After the cut insertion,
the master problem and the pricing problem are solved with the enlarged
cut pool. Thus, all columns generated in the remaining search satisfy the
updated incompatibility constraints, and the pricing problem is exact over
the remaining admissible columns. By
Proposition~\ref{prop:complete-bp-fixed-cut}, a complete search over the
updated cut pool would solve the updated RBMP exactly. Since the only
part omitted by the continued search consists of branches that cannot
contain an optimal solution after cut insertion, the continued search is
equivalent, with respect to optimality, to the complete search under the
updated cut pool.

Therefore, after adding valid Benders cuts, continuing the
branch-and-price search from the current state over the updated cut pool
preserves exactness.
\qed

\subsection{Proof of Proposition~\ref{prop:certified-rbmp-optimality}}

Let $z^*$ denote the optimal objective value of the complete Benders
formulation. Suppose that an integer RBMP solution with objective value
$k$ is certified geometrically feasible by the exact single-bin oracle.
Since every selected pattern can be packed into one bin, this solution is
feasible for the complete Benders formulation. Therefore,
\begin{equation}
    z^* \le k.
    \label{eq:prop5-upper}
\end{equation}

It remains to show that no feasible solution using fewer than $k$ bins
exists. Suppose, by contradiction, that $z^*=\ell<k$. Then there exists
a feasible solution of the complete Benders formulation using exactly
$\ell$ bins. Because all Benders cuts generated by the algorithm are
valid, no such cut can remove this feasible solution.
Hence, the corresponding integer master solution remains feasible for every RBMP encountered while objective layer \(\ell\) is being searched, including the final RBMP whose exact branch-and-price search would be used to exclude this objective layer.

The branch-and-price procedure is exact at each fixed cut pool, and the
continued search remains exact after valid Benders cuts are inserted.
Therefore, objective layer $\ell$ cannot be completely excluded while
such a feasible solution exists. This contradicts the assumption that
all objective layers smaller than $k$ have already been excluded.

Consequently,
\begin{equation}
    z^* \ge k.
    \label{eq:prop5-lower}
\end{equation}
Combining \eqref{eq:prop5-upper} and \eqref{eq:prop5-lower} yields
\[
    z^* = k.
\]
Thus, the geometrically certified solution at objective layer $k$ is
optimal for the complete Benders formulation.

\subsection{Proof of Theorem~\ref{thm:algorithm-exactness}}

The algorithm starts from a valid restricted Benders master problem and
generates only valid Benders feasibility cuts by
Proposition~\ref{prop:valid-benders-cuts}. Whenever cuts are added during
the branch-and-price search, already searched branches remain safely
fathomed by Proposition~\ref{prop:searched-branches-remain-fathomed}, and
continuing the search over the updated cut pool preserves exactness by
Proposition~\ref{prop:continued-bp-after-cut}. For any fixed cut pool,
the underlying complete branch-and-price search is exact by
Proposition~\ref{prop:complete-bp-fixed-cut}.

The objective-layered search examines bin counts in nondecreasing order,
starting from a valid lower bound. By Propositions 1--4, each smaller
objective layer is excluded exactly despite the dynamic insertion of
Benders cuts. When the algorithm finds an integer solution at objective
layer $k$ whose selected patterns are all certified geometrically feasible,
Proposition~\ref{prop:certified-rbmp-optimality} implies that this solution is optimal for the complete
Benders formulation.

Finally, the complete Benders formulation is equivalent to the original
two-dimensional irregular bin packing problem. Every feasible packing
induces a feasible set of selected patterns, and every selected pattern
certified feasible by the oracle corresponds to a valid single-bin
geometric packing. Hence the returned solution is an optimal solution of
the original problem.
\qed

  \end{document}